\documentclass[a4paper]{article}
\usepackage{graphicx} 
\usepackage{float}
\usepackage{amsmath}
\usepackage{amsthm}
\usepackage{url}
\usepackage{tikz}
\usetikzlibrary{calc}
\usepackage[T1]{fontenc}
\usepackage[utf8]{inputenc}

\title{Circular chromatic index of small graphs}
\author{Ján Mazák, Filip Zrubák}
\author{
Ján Mazák, Filip Zrubák
\\[3mm]
\\{\tt jan.mazak@fmph.uniba.sk}
\\[5mm]
Comenius University, Mlynská dolina, 842 48 Bratislava\\
}
\date{}

\theoremstyle{definition}
\newtheorem{theorem}{Theorem}
\newtheorem{lemma}{Lemma}
\newtheorem{problem}{Problem}
\newtheorem{conjecture}{Conjecture}

\usepackage{setspace}
\usepackage{xcolor}

\usepackage{geometry}
\def\cchi{\chi_c'}

\begin{document}

\maketitle

\begin{abstract}
We systematically determine circular chromatic index of small graphs and multigraphs with maximum degree $4$, $5$, $6$ (and also their number for a given small order). We construct several infinite families of such graphs with circular chromatic index in the set $\{\Delta + 1/2, \Delta + 2/3, \Delta + 3/4$, $\Delta + 1\}$. Our results refute edge-connectivity variants of the ``Upper Gap Conjecture'' (about the non-existence of graphs with circular chromatic index just below $\Delta + 1$).
\end{abstract}

\section{Introduction}

Circular chromatic index $\cchi$ has been investigated for more than two decades \cite{hackmann}, yet many questions remain open. For graphs with maximum degree $\Delta\le 2$, the situation is fully understood, and for $\Delta = 3$, all low-hanging fruit has been picked. This paper examines finite graphs and multigraphs with $\Delta\in \{4,5,6\}$.

A circular $r$-colouring $c$ of a graph $G$ is an assignment of a real number from $[0,r)$ (i.e. a colour) to each vertex of $G$ such that any two adjacent vertices $u$, $v$ satisfy $1\le |c(u) - c(v)|\le r-1$. An equivalent viewpoint is that we wrap the interval $[0,r)$ along a \emph{colour circle} of circumference $r$ and require that colours of adjacent vertices are at distance at least one along the circle (in both directions). The infimum of all values $r$ such that a graph $G$ is circularly $r$-colourable is its \emph{circular chromatic number} $\chi_c(G)$; for finite graphs, $\chi_c$ is rational and attained \cite{zhusurvey2001}.

A circular $r$-edge-colouring of a graph is a circular $r$-rolouring of its line graph; the circular chromatic index $\cchi(G)$ of a graph $G$ is the least $r$ such that $G$ admits a circular $r$-edge-colouring. Since circular colourings are a straightforward generalization of proper integer colourings, we will usually drop the word ``circular'' in the subsequent discussions. The notion of circular edge-colouring naturally generalizes to multigraphs.

The primary open question is the structure of the set of possible values of $\cchi$. It is known \cite{zhusurvey2006} that $\chi'(G) = \lfloor \cchi (G)\rfloor$ for any graph $G$. If $G$ is simple, $\cchi(G)\in [\Delta, \Delta + 1]$. The lower end of this interval is completely covered:
\begin{itemize}
    \item There exists a $3$-regular graph with $\cchi=r$ for any rational $r\in [3, 3+1/3]$ \cite{mazaklukotka}.
    \item For any odd $\Delta\ge 5$, there exists a $\Delta$-regular graph $G$ with $\cchi=r$ for any rational $r\in [\Delta, \Delta+1/4]$. If $G$ is allowed to contain parallel edges, the interval can be extended to $[\Delta, \Delta+1/3]$. \cite{Lin2013}
    \item For any even $\Delta\ge 4$, there exists a $\Delta$-regular graph $G$ with $\cchi=r$ for any rational $r\in [\Delta, \Delta+1/6]$. If $G$ is allowed to contain parallel edges, the interval can be extended to $[\Delta, \Delta+1/3]$. \cite{Lin2015}
\end{itemize}
Apart from these general constructions, there are some more specific ones \cite{candrakova2014,mazaklukotka} covering values slightly below or above $\Delta + 1/3$. The interval $[\Delta + 1/3, \Delta + 1/2]$ seems particularly challenging for $\Delta=3$: there are infinitely many values of $\cchi$ in the interval, yet it is likely that ``most'' values are not attained.

In the upper end of the interval near $\Delta + 1$, which is the focus of this paper, it seems there are no graphs.
\begin{itemize}
    \item There is no graph $G$ with $\cchi(G)\in (3 - 1/2, 3)$ \cite{zhusurvey2006}.
    \item There is no graph $G$ with $\cchi(G)\in (4 - 1/3, 4)$ \cite{afshani}.
    \item ``Upper Gap Conjecture'' (e.g. Problem 1 in \cite{Kaiser2004}, Conjecture 4.3 in \cite{zhusurvey2006}): given an integer $k\ge 4$, there are no graphs with $\cchi(G)\in [k+1-1/k, k+1]$.
\end{itemize}
While we are mostly concerned with the Upper Gap Conjecture, most of our discussion also pertains to the middle portion of the interval $[\Delta, \Delta+1]$, especially the parts highlighting why the problem is difficult. 

One would expect the situation to be more complex for any $\Delta > 3$ compared to subcubic graphs because of the following behaviour of circular edge-colourings in the neighbourhood of a vertex. Imagine a graph $G$ with $\Delta = 3$ and a circular $r$-edge-colouring for $r$ just slightly below $4$. Pick any vertex $v$ of $G$ and assume two of its incident edges are already coloured. Their colours split the colour circle of length $r$ into two arcs. The longer of the arcs will very likely have length at least $2$ since $r$ is close to $4$, and that is sufficient to accommodate a colour for the third edge at $v$. The two given colours would need to be almost precisely opposite each other to remove the possibility of colouring the third edge (only one simple graph and one additional multigraph force this situation \cite{afshani}). For larger $\Delta$, however, even at a vertex of degree $d$ much smaller than $\Delta$, it would be challenging to colour the remaining edge after $d-1$ edges were coloured if their colours are spread apart (with circular distances slightly below $2$).
One would thus not expect subcubic graphs with multiple vertices of degree $2$ and an interesting value of $\cchi$, yet for $\Delta > 3$ vertices of small degree are not uncommon (e.g. vertices of degree $3$ appear in graphs with $\Delta = 5$ and $\cchi = 5 + 3/4$). This is one of the reasons why we extended our search beyond regular graphs.

There are large classes of graphs with $\cchi$ dictated by a small subgraph, and thus not of much interest. Investigation should be focused on those that are \emph{$\cchi$-critical}: any proper subgraph $G'$ of such a graph $G$ satisfies $\cchi(G')<\cchi(G)$. For instance, we have a complete characterisation of $\cchi$-critical multigraphs $G$ with $\Delta(G)\le 3$ and $\cchi(G)=4$: there are two of them, and they arise by subdividing an edge in either $K_4$ or a multigraph with $2$ vertices and $3$ parallel edges \cite{afshani}.

Any attempt to prove a variant of the Upper Gap Conjecture (i.e. a statement about graphs being circularly $r$-edge-colourable for some $r < \Delta + 1$) must account for the exceptions having $\cchi=\Delta+1$. These exceptions appear quite numerous for $\Delta>3$ and can be fairly large. We demonstrate some infinite families of them in Section~\ref{sec:maxcchi}, yet graphs in those families are not $\cchi$-critical, so the search continues.

The methods and notions related to circular chromatic index used in this paper are standard; an interested reader can find them e.g. in \cite{zhusurvey2001,zhusurvey2006}. The \emph{circular distance} of two colours $x$, $y$ in a circular $r$-edge-colouring is defined as $d_r(x,y) = \min\{|x-y|, r-|x-y|\}$.

A common efficient way for encoding graphs and multigraphs are the graph6 and sparse6 formats; details can be found in \cite{McKay2014}. We use these formats in this paper to describe specific graphs since they are most useful if a reader wanted to analyze the graphs using computer tools.

\section{Computational methods}

\def\cchiless{CCHI$^\le_r$\ }
\def\cchiequal{CCHI$^=_r$\ }

We start by discussing theoretical complexity of determining circular chromatic index. Let $r\ge 3$ be a rational number. Let \cchiless be the problem of determining whether $\cchi(G)\le r$ for a given graph $G$, and \cchiequal the problem of determining whether $\cchi(G) = r$ for a given graph $G$.

Since the problem of deciding whether $\chi' \le k$ for an integer $k\ge 3$ is equivalent to deciding whether $\cchi \le k$, both are NP-hard even for regular graphs \cite{Leven1983NP} (and it does not get any easier for multigraphs). Knowing $\chi'$ of the input graph does not reduce the complexity \cite{nadolski}: it is NP-hard to determine $\cchi$ for a given Class 2 graph, and it is NP-hard to determining whether $\cchi = \chi'$ for a given graph.

A certificate describing a $p/q$-edge-colouring of a graph $G$ is polynomial in terms of $|E(G)|$, hence \cchiless belongs to NP and is thus NP-complete.
However, to know that $\cchi(G) = r$, we also need a proof that $G$ has no $p/q$-edge-colouring for $p/q < r$. The set of candidate fractions $p/q$ is finite \cite{hackmann}, so essentially we need the proof of non-existence only for a single fraction $p/q$. This is a coNP-complete problem. This would suggest the following.

\begin{conjecture}
The problem \cchiequal is DP-complete.
\label{DPcomplete}
\end{conjecture}

Note that the situation is not the same as for SAT-UNSAT whose DP-completness is easily seen. The NP and coNP components are inherently linked in our case, as we have a specific pair of fractions for a specific graph, not an arbitrary pair of \cchiless instances. There are other known ways of determining $\cchi$ (some of them are explored in \cite{ghebleh2007theorems}); for instance, one can check all $r$-edge-colourings for the presence of tight cycles. This method could be faster in practice occasionally (a SAT instance for $41/6$-edge-colouring is much much larger than an instance for $7$-edge-colouring), yet it does not seem to remove coNP-hardness: we are proving that a $7$-edge-colouring exists, but a $7$-edge-colouring without a tight cycle does not exist.

There is a huge difference in practice when determining $\cchi$ vs. $\chi'$. The obvious culprit is the number of colours: while for integer edge-colourings it is at most $2\Delta$ even for multigraphs, it is up to $|E(G)|$ for circular edge-colourings \cite{hackmann}.
The fastest general method we have found is formulating the existence of a $p/q$-edge-colouring as an instance of SAT and then using state-of-the-art SAT solvers like Kissat \cite{kissat, sat2024} to solve it, repeating the process for all possible fractions $p/q$ (the set of fractions depends on the size of the input graph and the process can be sped up by choosing suitable fractions first; more details can be found in \cite{bernat2024circular}). Boolean variables correspond to pairs (edge, colour), and since the number of colours goes up to the number of edges potentially, the number of variables grows as a quadratic function of the number of edges. Our experience with various combinatorial problems suggests that this quadratic growth quickly leads to practical intractability with current SAT solvers, and it is the case for $\cchi$, too. The fractions with large numerator $p$ are not avoidable because even if a graph has $\cchi$ with a small numerator, one needs to prove the lower bound, i.e. exclude all smaller fractions, including those with large numerators. 

Complete sets of graphs and multigraphs with specific order were generated using nauty and genreg \cite{McKay2014,genreg}. In some instances, the numbers of such (multi)graphs were not readily available in sources like The On-Line Encyclopedia of Integer Sequences, so could be considered ``new''.

For filtering uncolourable (Vizing class 2) graphs and multigraphs, we used the CVD heuristic \cite{fiolvilaltella}, straightforward backtracking, and SAT solvers (unlike the latter two methods, the heuristic cannot prove uncolourability, but often finds a $\Delta$-edge-colouring of class 1 graphs very quickly). The approach was engineered on a case-by-case basis depending on how specific tools performed on a specific family of graphs. In certain cases, like multigraphs with $\Delta = 4$ on $13$ vertices, it proved most efficient to first filter out uncolourable graphs that contain a small subgraph forcing $\cchi = \Delta + 1$ (finding a subgraph isomorphism/monomorphism using the VF2 algorithm \cite{Cordella2004} seems much faster than determining $\cchi$).

With current computers and the mentioned computational methods, we can determine $\cchi$ of graphs with $\Delta=3$ with about $50$ edges (somewhere around this bound it starts to take several minutes per graph on a single CPU core, and at $60$ edges it might take days). However, for $\Delta = 6$, graphs with about $30$ edges typically already need hours, and for $\Delta = 7$ some graphs with just $27$ edges take weeks.

Another difficulty stems from the explosion in the number of graphs of given order for $\Delta\ge 5$. While the number of $3$-regular graphs grows about $10$x per $2$ additional vertices, it is more like $100$x per single vertex for $\Delta = 7$ \cite{meringer1999fast}, so one runs into difficulties very quickly.

Essentially, experimental results on the Upper Gap Conjecture for $\Delta \ge 7$ are currently out of reach. For $\Delta = 7$, the lower bound of the conjectured gap has denominator $7$, and a graph with $\cchi(G)=p/q$ must have a matching of size $q$, i.e. at least $14$ vertices for $q = 7$. Yet for such ``big'' graphs we can hardly determine the existence of a $p/q$-edge-colouring with $q = 2$.

\section{Overview of values of $\cchi$}

If $G$ has a loop, it does not have any circular edge-colouring. For multigraphs, the Vizing theorem only guarantees $\chi' \le \Delta + \mu$, where $\mu$ is the maximum multiplicity of an edge \cite{vizing1965}, and the Shannon bound \cite{Shannon1949} gives $\chi' \le \lfloor\frac 32 \Delta\rfloor$ (and is tight for certain graphs). We have not found any multigraphs with $\cchi > \Delta + 1$ that would contribute some interesting value of $\cchi$.

In Tables~\ref{table:maxd04_g02}, \ref{table:maxd04_g03}, \ref{table:d04_g03}, \ref{table:maxd05_g02}, \ref{table:maxd05_g03}, \ref{table:d05_g03}, \ref{table:maxd06_g02}, \ref{table:maxd06_g03}, \ref{table:d06_g03}, we only include rows for which at least one Class 2 graph of interest exists (this corresponds to a non-zero entry in the the third column).
In cases where the number of graphs is large or computation of $\cchi$ takes long, we skipped fully determining $\cchi$ of graphs from the lower half of the interval of interest (e.g. ``$\le 9/2$'' for order $15$ in Table~\ref{table:maxd04_g03}). The first occurrence of a value in a table is marked in bold.

The upper bound of $|V(G)|/2$ on denominator of $\cchi(G)$ ensures that fractions with large denominators cannot appear in the initial rows of a table. The data suggest several more empirical patterns:
\begin{itemize}
    \item A specific value of denominator does not appear immediately when the trivial bound allows it to, but only after a couple of extra vertices.
    \item For a fixed denominator, fractions with larger numerators tend to appear at larger orders than those with smaller numerators. 
    \item A specific fraction appears first for multigraphs, only later for simple graphs.
\end{itemize}
These patterns are best illustrated with denominators $3$, $4$, $5$ for $\Delta=4$.

\begin{table}
\centering
\begin{tabular}{|r|r|r|c|}
\hline
\textbf{Order} & \textbf{Multigraphs} & \textbf{With $\chi' > \Delta$} & \textbf{Values of $\cchi$} \\
\hline
3 & 5 & 3 & ${\bf 5/1}$, ${\bf 6/1}$ \\
4 & 25 & 4 & $5/1$ \\
5 & 124 & 37 & ${\bf 9/2}$, $5/1$ \\
6 & 704 & 104 & $9/2$, $5/1$ \\
7 & 4283 & 784 & ${\bf 13/3}$, $9/2$, ${\bf 14/3}$, $5/1$ \\
8 & 29773 & 3351 & $13/3$, $9/2$, $5/1$ \\
9 & 227016 & 27957 & $\bf 17/4$, $13/3$, $9/2$, $14/3$, $5/1$ \\
10 & 1916000 & 156189 & $17/4$, $13/3$, $9/2$, $14/3$, $5/1$ \\
11 & 17633748 & 1512908 & $\bf 21/5$, $17/4$, $13/3$, $\bf 22/5$, $9/2$, $14/3$, $\bf 19/4$, $5/1$ \\
12 & 176094228 & 10354895 & $21/5$, $17/4$, $13/3$, $22/5$, $9/2$, $14/3$, $19/4$, $5/1$ \\
13 & 1893482910 & 116572317 & $21/5$, $17/4$, $13/3$, $22/5$, $9/2$, $\bf 23/5$, $14/3$, $19/4$, $5/1$ \\
\hline
\end{tabular}
\caption{Multigraphs with $\Delta = 4$.}
\label{table:maxd04_g02}
\end{table}

\begin{table}
\centering
\begin{tabular}{|r|r|r|c|}
\hline
\textbf{Order} & \textbf{Graphs} & \textbf{With $\chi' > \Delta$} & \textbf{Values of $\cchi$} \\
\hline
5 & 11 & 2 & $\bf 9/2$, $\bf 5/1$ \\
6 & 49 & 2 & $5/1$ \\
7 & 289 & 15 & $9/2$, $5/1$ \\
8 & 1735 & 27 & $9/2$, $5/1$ \\
9 & 11676 & 242 & $\bf 13/3$, $9/2$, $\bf 14/3$, $5/1$ \\
10 & 87669 & 609 & $9/2$, $5/1$ \\
11 & 733811 & 7556 &  $13/3$, $9/2$, $14/3$, $5/1$ \\
12 & 6781207 & 22745 &  $13/3$, $9/2$, $5/1$ \\
13 & 68462296 & 421416 & $\bf 17/4$, $13/3$, $\bf 22/5$, $9/2$, $14/3$, $5/1$ \\
14 & 748330892 & 1344629 & $17/4$, $13/3$, $22/5$, $9/2$, $14/3$, $5/1$ \\
15 & 8787966433 & 35790315 & $\le 9/2$; $14/3$, $5/1$ \\
\hline
\end{tabular}
\caption{Simple graphs with $\Delta = 4$.}
\label{table:maxd04_g03}
\end{table}

\begin{table}
\centering
\begin{tabular}{|r|r|r|c|}
\hline
\textbf{Order} & \textbf{Graphs} & \textbf{With $\chi' > \Delta$} & \textbf{Values of $\cchi$} \\
\hline
5 & 1 & 1 & $\bf 5/1$ \\
7 & 2 & 2 & $5/1$ \\
9 & 16 & 16 & $\bf 9/2$, $\bf 14/3$, $5/1$ \\
10 & 59 & 1 & $5/1$ \\
11 & 265 & 265 & $9/2$, $14/3$, $5/1$ \\
12 & 1544 & 9 & $5/1$ \\
13 & 10778 & 10778 & $\bf 13/3$, $9/2$, $14/3$, $5/1$ \\
14 & 88168 & 197 & $9/2$, $14/3$, $5/1$ \\
15 & 805491 & 805491 & $13/3$, $\bf 22/5$, $9/2$, $14/3$, $5/1$ \\
16 & 8037418 & 6826 & $9/2$, $14/3$, $5/1$ \\
17 & 86221634 & 86221634 & $\le 9/2$; $14/3$, $5/1$ \\
18 & 985870522 & 401169 & $9/2$, $14/3$, $5/1$ \\
\hline
\end{tabular}
\caption{Simple $4$-regular graphs.}
\label{table:d04_g03}
\end{table}


\begin{table}
\centering
\begin{tabular}{|r|r|r|c|}
\hline
\textbf{Order} & \textbf{Multigraphs} & \textbf{With $\chi' > \Delta$} & \textbf{Values of $\cchi$} \\
\hline
3 & 5 & 3 & $\bf 6/1$, $\bf 7/1$ \\
4 & 56 & 12 & $6/1$, $7/1$ \\
5 & 456 & 137 & $\bf 11/2$, $6/1$, $7/1$ \\
6 & 5075 & 789 & $11/2$, $6/1$, $7/1$ \\
7 & 61840 & 10278 & $\bf 16/3$, $11/2$, $\bf 17/3$, $6/1$, $7/1$ \\
8 & 907389 & 84323 & $16/3$, $11/2$, $17/3$, $6/1$, $7/1$ \\
9 & 15346680 & 1507430 & $\bf 21/4$, $16/3$, $11/2$, $17/3$, $6/1$, $7/1$ \\
10 & 299439341 & 16082522 & $\le 11/2$; $17/3$, $\bf 23/4$, $6/1$, $7/1$\\
11 & 6646221977 & 397945318 & $\le 11/2$; $17/3$, $23/4$, $6/1$, $7/1$\\
\hline
\end{tabular}
\caption{Multigraphs with $\Delta = 5$.}
\label{table:maxd05_g02}
\end{table}

\begin{table}
\centering
\begin{tabular}{|r|r|r|c|}
\hline
\textbf{Order} & \textbf{Graphs} & \textbf{With $\chi' > \Delta$} & \textbf{Values of $\cchi$} \\
\hline
7 & 344 & 7 & $\bf 16/3$, $\bf 11/2$, $\bf 6/1$ \\
8 & 4457 & 21 & $11/2$, $6/1$ \\
9 & 63493 & 372 & $16/3$, $11/2$, $6/1$ \\
10 & 1067440 & 1586 & $11/2$, $6/1$ \\
11 & 20764005 & 65458 & $16/3$, $11/2$, $\bf 17/3$, $6/1$ \\
12 & 464341835 & 337393 & $\le 11/2$; $17/3$, $6/1$ \\
13 & 11783221545 & 54152854 & $\le 11/2$; $17/3$, $6/1$ \\
\hline
\end{tabular}
\caption{Simple graphs with $\Delta = 5$.}
\label{table:maxd05_g03}
\end{table}

\begin{table}
\centering
\begin{tabular}{|r|r|r|c|}
\hline
\textbf{Order} & \textbf{Graphs} & \textbf{With $\chi' > \Delta$} & \textbf{Values of $\cchi$} \\
\hline
14 & 3459383 & 26 & $\bf 11/2$, $\bf 6/1$ \\
16 & 2585136675 & 2885 & $11/2$, $\bf 17/3$, $\bf 23/4$, $6/1$ \\
\hline
\end{tabular}
\caption{Simple $5$-regular graphs.}
\label{table:d05_g03}
\end{table}


\begin{table}
\centering
\begin{tabular}{|r|r|r|c|}
\hline
\textbf{Order} & \textbf{Multigraphs} & \textbf{With $\chi' > \Delta$} & \textbf{Values of $\cchi$} \\
\hline
3 & 9 & 6 & $\bf 7/1$, $\bf 8/1$, $\bf 9/1$ \\
4 & 110 & 27 & $7/1$, $8/1$ \\
5 & 1524 & 492 & $\bf 13/2$, $7/1$, $\bf 15/2$, $8/1$ \\
6 & 28279 & 4392 & $13/2$, $7/1$, $8/1$ \\
7 & 649852 & 103436 & $\bf 19/3$, $13/2$, $\bf 20/3$, $7/1$, $8/1$ \\
8 & 18596557 & 1419968 & $19/3$, $13/2$, $20/3$, $7/1$, $8/1$ \\
\hline
\end{tabular}
\caption{Multigraphs with $\Delta = 6$.}
\label{table:maxd06_g02}
\end{table}

\begin{table}
\centering
\begin{tabular}{|r|r|r|c|}
\hline
\textbf{Order} & \textbf{Graphs} & \textbf{With $\chi' > \Delta$} & \textbf{Values of $\cchi$} \\
\hline
7 & 853 & 4 & $\bf 13/2$, $\bf 7/1$ \\
8 & 10073 & 12 & $13/2$, $7/1$ \\
9 & 185323 & 235 & $\le 13/2$; $\bf 20/3$, $7/1$ \\
10 & 4992383 & 1380 & $\le 13/2$; $20/3$, $7/1$ \\
11 & 160175079 & 137444 & $\le 13/2$; $20/3$, $\bf 27/4$, $7/1$ \\
\hline
\end{tabular}
\caption{Simple graphs with $\Delta = 6$.}
\label{table:maxd06_g03}
\end{table}

\begin{table}
\centering
\begin{tabular}{|r|r|r|c|}
\hline
\textbf{Order} & \textbf{Graphs} & \textbf{With $\chi' > \Delta$} & \textbf{Values of $\cchi$} \\
\hline
7 & 1 & 1 & $\bf 7/1$ \\
9 & 4 & 4 & $7/1$ \\
11 & 266 & 266 & computation takes too long \\
13 & 367860 & 367860 & computation takes too long\\
14 & 21609300 & 7 & $7/1$ \\
\hline
\end{tabular}
\caption{Simple $6$-regular graphs.}
\label{table:d06_g03}
\end{table}

\section{Graphs with large values of $\cchi$}
\label{sec:maxcchi}

Graphs constructed in proofs of Theorems~\ref{thm:7}, \ref{thm:6}, \ref{thm:5} have $\cchi = \Delta + 1$ and are thus crucial obstacles in proving any variant of the Upper Gap Conjecture. There are many other such graphs, with no obvious pattern or characterization for any $\Delta > 3$. Since they are exceptions, either their characterization must emerge from the proof, or the specific statement being proved must exclude them upfront, possibly by some stronger yet simple assumption that does not aim at characterizing all the exceptions. One suggestion for such exclusion was to assume $2$-edge-connectivity (see the discussion after Problem 1 in \cite{Kaiser2004}); our findings demonstrate that it is insufficient. Another suggestion in the same paper was edge-connectivity at least $\Delta$. This is also not sufficient, at least not for $\Delta = 4$: the circulants $C_{2n+1}(1,2)$ for $n = 2$ (this is just $K_5$) and $n = 3$ have $\cchi = 5$ \cite{bakalarka}, while they are $4$-connected.

It seems that there is no end in sight for $4$-regular graphs with $\cchi = 5$: even for $17$ and $18$ vertices we encountered such graphs, and they were $\cchi$-critical, i.e. their $\cchi$ was not forced by some smaller subgraph. The larger ones have a cut-vertex, but there is a $4$-connected one on $9$ vertices (graph6 code \verb|HEhbtjK|), too. The situation seems similar for $\Delta\in\{5, 6\}$, but for those we only managed to analyze very small graphs.

As far as we know, examples of simple non-regular graphs with $\cchi = \Delta + 1$ are known only for specific small values of maximum degree $\Delta$ (we want non-regularity to be able to use those graphs in some constructions of bigger families, possibly with large connectivity). We dare to propose a conjecture in this direction (mentioned in \cite[Conjecture 2]{bakalarka}).

\begin{conjecture}
For every even integer $d > 4$, the graph $G$ obtained from $K_{d+1}$ by removing a single edge has $\cchi(G) = d + 1$.
\end{conjecture}

We have computationally verified this conjecture for $d\in \{6,8,10,12\}$. For $d = 4$, it would hold with a small modification: after removing an edge from $K_5$, $\cchi$ drops to $9/2$, and one needs to attach a pendant vertex to one of the vertices of degree $3$ to keep $\cchi = 5$. Note that for even $d$, it is known that $\cchi(K_{d+1}) = d+1$ because $\cchi(G)\ge |E(G)|/\alpha'(G)$ \cite{Hackmann2004}. Current methods of determining $\cchi$ are not very good at dealing with arbitrary vertex degree because lower bounds are hard to establish, so conjectures of this kind might be tougher than they appear at first sight. For instance, the lower bound $\cchi(K_{d+1})\ge d+1$ is just a direct consequence of a general bound based on counting edges, paying no regard to the structure of a specific graph. Attempts to improve lower bounds above what trivial counting provides are often hampered by graphs with small value of $\cchi$ which provide a hard limit on how far a general lower bound can go \cite{macaj2013asymptotic}.



Since graphs with $\cchi > \Delta + 1/2$ seem rare, infinite classes of them are interesting, especially with higher connectivity. For graphs with $\Delta = 3$, there is only a single such graph known. In the subsequent sections, we provide some constructions of them based on generalizations of specific small examples found by the described exhaustive computer search. Unfortunately, the construction methods are very simple and only produce graphs that are not $\cchi$-critical.
We start our presentation with higher maximum degrees because somehow the situation is more complicated for $\Delta = 4$ (this might be just because it was possible to analyze more such graphs). The following lemma allows to shorten several proofs later.

\begin{lemma}
Assume that there exists a connected graph $H$ with maximum degree $\Delta$, $\cchi(H) = \Delta + 1$, exactly two vertices of degree $1$, and having no bridge except those incident with vertices of degree $1$. Then there exist infinitely many $\Delta$-regular $2$-edge-connected graphs $G$ satisfying $\cchi(G) = \Delta + 1$. In addition, if $H$ does not have a cut-vertex not adjacent to a vertex of degree $1$, then $G$ is $2$-connected.
\label{lemma:regularization}
\end{lemma}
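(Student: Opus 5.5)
The plan is to build $G$ by gluing copies of $H$ along their pendant edges, so that monotonicity of $\cchi$ under subgraphs gives the lower bound and Vizing's theorem gives the upper bound. Let $u_1,u_2$ be the two vertices of degree $1$ in $H$, with neighbours $w_1,w_2$. First note that $\Delta\ge 3$ in any interesting case: paths and cycles do not have $\cchi=\Delta+1$ with exactly two leaves. Let $H^-=H-\{u_1,u_2\}$; in $H^-$ we have $\deg(w_1)=\deg_H(w_1)-1$ and $\deg(w_2)=\deg_H(w_2)-1$ (or both decrements at one vertex if $w_1=w_2$).

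\textbf{Regularization.} In $H^-$ the total deficiency $\sum_v(\Delta-\deg_{H^-}v)$ is some finite number $D\ge 2$. I would first make every vertex except the two ``port'' vertices full by gluing in standard gadgets. For each vertex $v\ne w_1,w_2$ of degree less than $\Delta$, the gadget is a copy of $K_{\Delta+1}$ minus an edge $xy$, or a similar $\Delta$-edge-colourable gadget with deficiency pattern $(1,1)$, joined to $v$ and to another deficient vertex. Doing this in a way that keeps the graph bridgeless is fiddly. A cleaner choice is to take $k\ge 2$ copies $H^{(1)},\dots,H^{(k)}$ of $H^-$ arranged cyclically, joining $w_2$ of copy $i$ to $w_1$ of copy $i+1$. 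This recreates, for each copy, exactly the pendant edges of $H$.

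For the remaining deficient vertices, I would take two copies of the whole cyclic structure and join corresponding deficient vertices by edges, repeating as necessary: a mirror-doubling argument. Doubling with matching edges between copies adds one to each deficient degree. After $\Delta-\min\deg$ rounds the graph is $\Delta$-regular, and every copy of $H$ (with its two pendant edges) remains a subgraph. Varying $k$ gives infinitely many graphs.

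\textbf{Key properties.} (i) $\cchi(G)\ge\cchi(H)=\Delta+1$, since $H$ is a subgraph of $G$ (the pendant edges of a copy are the cycle edges to its neighbouring copies). (ii) $\cchi(G)\le\Delta+1$ by Vizing, because $G$ is simple. (iii) $2$-edge-connectivity: a bridge of $G$ cannot lie inside a copy of $H^-$, because $H$ has no bridges other than its pendant edges. It cannot be a cycle edge, because the copies form a cycle. It cannot be a doubling edge, because the two halves are also joined through the other doubling edges, or through the cycle if there are at least two deficient vertices. For the same reason a doubling edge between mirror vertices lies on a cycle: go across, then return along mirrored paths and across another doubling edge. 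If only one doubling edge is used per round, I would instead double the cycle with a twist to guarantee two cross edges. (iv) $2$-connectivity under the extra hypothesis: a cut vertex of $G$ would have to be a cut vertex of some copy of $H$ that is not adjacent to a leaf. The connections through the cycle replace exactly the leaves, and doubling adds parallel routes.

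\textbf{Main obstacle.} I expect the hardest step to be making the regularization rigorous while preserving bridgelessness and $2$-connectivity, particularly the bookkeeping of the mirror-doubling when deficiencies differ among vertices. I would handle it by iterating: at each round, take two copies of the current graph and add the perfect matching between copies of all vertices of degree below $\Delta$. This preserves simplicity and subgraph containment, and each deficiency drops by one. Connectivity arguments are then a short induction using the fact that the current graph is connected and has at least two deficient vertices whenever it is not yet regular. (The degree-sum parity of $H^-$ forces at least two, or one with deficiency at least $2$, which matches twice.)
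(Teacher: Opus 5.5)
Your cyclic chain is the paper's first step. The paper takes $k\ge 3$ copies of $H$, identifies $v_i$ with $u_{i+1}$ and suppresses the new degree-$2$ vertex, which produces exactly your edge $w_2^{(i)}w_1^{(i+1)}$. The two bounds, $\chi_c'(G)\ge\chi_c'(H)$ by subgraph monotonicity and $\chi_c'(G)\le\Delta+1$ by Vizing, are also the same as the paper's.

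The routes differ in the regularization. The paper doubles only once and joins each deficient vertex $x$ to its mirror by $\Delta-\deg(x)$ parallel edges. It then restores simplicity by replacing each such edge with a copy of $K_{\Delta+1}$ minus an edge, whose two free ends are attached to the two mirror vertices. Its $2$-edge-connectivity is immediate because every cut between the halves has at least $k\ge 3$ edges. You instead iterate König's doubling with a matching $\Delta-\delta(G')$ times. This keeps the graph simple without any gadget, at the price of a factor $2^{\Delta-\delta(G')}$ in size and an induction over rounds. That induction is fine once each round adds at least two cross edges: connected, bridgeless halves plus two cross edges give a bridgeless graph.

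Two small repairs are needed.
First, take $k\ge 3$. For $k=2$ and $w_1=w_2$ the two chain edges are parallel, and $H$ is no longer a subgraph.
Second, the reason every round has at least two cross edges is the $k$-fold symmetry of the chain: deficient vertices come in $k$-tuples and are then doubled. Degree-sum parity does not give this. Also, a vertex of deficiency at least $2$ still receives only one cross edge per round, and the two halves are joined only by cross edges, not ``through the cycle''.

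The $2$-connectivity paragraph has a genuine hole. Nothing in your argument stops a copy of $w_1$ or $w_2$ from being a cut vertex of $G$, and this really happens. Let $H$ be $K_5$ with an edge $ab$ subdivided by a vertex $w$, plus two leaves at $w$. It satisfies all the hypotheses:
\begin{itemize}
\item $\chi_c'(H)=5$, since $H$ contains $K_5-ab$ with a pendant edge at $a$, which is the result the paper quotes for $\Delta=4$;
\item its only cut vertex is $w$, which is adjacent to the leaves.
\end{itemize}
The chain is already $4$-regular, so $G=G'$, and each copy of $w$ separates its copy of $K_5-ab$ from the rest. The paper's one-line justification (these vertices ``become part of a cycle'') has the same gap. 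So the second claim needs an extra hypothesis, for instance $w_1\neq w_2$ and $H-\{u_1,u_2\}$ $2$-connected. Under that hypothesis your approach works cleanly: the chain is $2$-connected, and each round adds at least two cross edges. Deleting a single vertex therefore always leaves a cross edge, so every doubling preserves $2$-connectivity.
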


\begin{proof}
Take $k\ge 3$ copies of $H$, label the pendant vertices of the $i$-th copy $u_i$ and $v_i$, and then identify $v_i$ with $u_{i+1}$ and suppress the resulting vertex of degree $2$ for each $i\in\{0,1,\dots, k-1\}$ (indices modulo $k$), obtaining a $2$-edge-connected graph $G'$. 
If $G'$ is regular, take $G = G'$. Otherwise, take two copies $G_1$, $G_2$ of $G'$ and repeat the following procedure for each vertex $u$ in $G_1$ that has degree $d < \Delta$: there is a vertex corresponding to $u$ in $G_2$ with the same degree, so connect them with $\Delta - d$ parallel edges. None of the added parallel edges is a bridge because $u$ must be in some copy of $H$, and other copies of $H$ have vertices corresponding to $u$, so any edge-cut separating $G_1$ from $G_2$ in the graph being constructed has at least $k\ge 3$ edges. Finally, replace each of the parallel edges with $K_{\Delta+1}$ with one edge cut into a pair of dangling edges. The resulting graph $G$ is simple, $\Delta$-regular, $2$-edge-connected, and contains $H$ as a subgraph, so $\cchi(G)\ge \Delta + 1$. The Vizing theorem guarantees $\cchi(G)\le \Delta + 1$. Our construction does not create any new cut-vertices, and the cut-vertices adjacent to vertices of degree $1$ in $H$ become part of a cycle in $G$, so there are no cut-vertices in $G$.
\end{proof}

Obviously, the ``degree regularization'' technique used in this proof is rarely efficient if we care about minimizing the number of added vertices or edges. Similar ``regularization'' approaches rarely work for small $H$ with $\cchi(H) < \Delta+1$ because edges added to vertices of $H$ tend to increase $\cchi$. In other words, even if we find a graph $H$ with some interesting value of $\cchi$ that has plenty of vertices of degree below $\Delta$, we might not be able to generalize it into an infinite class of regular graphs: we cannot cut edges of $H$ because it will decrease $\cchi$, and we cannot attach new edges to $H$ because it will increase $\cchi$. The question of what are the values of $\cchi$ that are attained, but only for a finite family of graphs, is intriguing in itself, and widely open for $\cchi > 3$  (one likely candidate is $11/3$, which is the $\cchi$ of the Petersen graph).

\section{Families with large values of $\cchi$ for $\Delta = 6$}

\begin{theorem}
There exist infinitely many graphs $G$ with $\Delta(G) = 6$ and $\cchi(G) = 6+2/3$.
\label{thm:20/3}
\end{theorem}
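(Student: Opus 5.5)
The plan is to start from a small graph with $\Delta=6$ and $\cchi = 20/3$ and glue copies of it into an infinite family. Table~\ref{table:maxd06_g02} already gives multigraphs on $7$ vertices with $\cchi=20/3$, and Table~\ref{table:maxd06_g03} gives simple graphs on $9$ vertices with this value. I would pick such a base graph $H$ and make sure it has a vertex of degree below $6$, which is easy when $H$ is not regular. The family is then built in one of two ways. The first option is to take $k$ copies of $H$ and join them in a path or cycle by single edges between low-degree vertices, which keeps $\Delta=6$. The second option is to take a disjoint union of $k$ copies; connectivity is not demanded by the statement, but connected examples are nicer.

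The lower bound is immediate. Each member $G_k$ of the family contains $H$ as a subgraph, so $\cchi(G_k)\ge \cchi(H)=20/3$.

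For the upper bound I would start from a $20/3$-edge-colouring $c$ of $H$. Scaling by $3$, this is the same as a circular $(20,3)$-colouring with colours in $\mathbb{Z}_{20}$, where colours on adjacent edges differ by at least $3$ modulo $20$. Such a colouring is found by computer. Each copy of $H$ is coloured by $c$ composed with a rotation or reflection of the colour circle. The connecting edge $e=xy$ between two copies needs a colour in $\mathbb{Z}_{20}$ at circular distance at least $3$ from every colour at $x$ and every colour at $y$.

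At a vertex of degree $d\le 5$, the forbidden set consists of at most $d$ arcs, each open of length $6$ after scaling, so at most $5$ colours apiece. If $x$ and $y$ both have degree at most $2$ in $H$, the forbidden set has at most $4\cdot 5=20$ colours. That is not enough in general, but the colours at $x$ are themselves pairwise at distance at least $3$, so their forbidden arcs overlap, and rotating the second copy gives enough freedom.

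The main obstacle is when the available low-degree vertices of $H$ have larger degree, say $4$ or $5$. At a degree-$5$ vertex the free part of the circle is small, and the copies' colourings must be aligned precisely. I would first search, by computer, for a $20/3$-colouring of $H$ in which some low-degree vertex $x$ leaves a free colour $\alpha$. Then I glue copies via edges $x_i x_{i+1}'$ and rotate each copy so that $\alpha$ is free at both ends of every connecting edge; the freedom to reflect handles parity. If the available base graphs resist this, the fallback is to identify a degree-$\le 3$ vertex of one copy with a degree-$\le 3$ vertex of the next, provided their colour sets can be rotated to be compatible; this again reduces to a finite check on $H$. The disjoint-union family is the safe fallback that certainly proves the statement as worded.
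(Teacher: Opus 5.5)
Your disjoint-union fallback does prove the statement as literally worded. Take a simple graph $H$ of order $9$ with $\Delta(H)=6$ and $\cchi(H)=20/3$ from Table~\ref{table:maxd06_g03}. Take it from there rather than from the multigraph Table~\ref{table:maxd06_g02}, since the theorem concerns graphs. The union of $k$ disjoint copies then has $\Delta=6$ and $\cchi=20/3$, because components are coloured independently.

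This is a genuinely different and much cheaper route than the paper's. But it only shows that the statement follows from a single example, whereas the paper produces connected graphs. The paper uses a $10$-vertex graph $H$ with $\cchi(H)=20/3$ that has a vertex of degree $1$, one vertex of degree $5$, and the rest of degree $6$. It hangs one copy of $H$ from each vertex of a $k$-cycle by identifying the pendant vertex with that cycle vertex.

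The lower bound is subgraph monotonicity, exactly as in your plan. For the upper bound, rotate each copy so that its pendant edge gets colour $0$, and colour the cycle properly with $1,2,3$. On a circle of length $20/3$ these are pairwise at distance at least $1$ and at distance at least $1$ from $0$. The point is that each copy needs only one attachment point, carrying a single edge whose colour is controlled by rotation.

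Your connected plans instead chain copies through edges $x_ix'_{i+1}$. That needs a $20/3$-colouring of $H$ with free colours at two low-degree vertices simultaneously. The paper reports that the degree-$5$ vertices of its $20/3$-graphs cannot accept an extra edge: attaching a pendant vertex to the degree-$5$ vertex of its $H$ already gives $\cchi=7$. So the computer search you propose would very likely fail, and the connected part of your proposal should not be regarded as established.
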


\begin{proof}
Among graphs on $10$ vertices, our search discovered a graph $H$ with $\cchi = 20/3$, one vertex of degree $1$, one vertex of degree $5$, and the remaining vertices of degree $6$. Its graph6 code is \verb|ICrUux}vO|.
For any $k\ge 3$, one can take a cycle of length $k$, add $k$ copies of $H$, and identify the vertex of degree $1$ in each copy with one of the vertices of the cycle (different copies paired with different vertices).
The resulting graph $G_k$ has $\cchi = 20/3$.
\end{proof}

It would be nice to have examples with higher connectivity, but we do not have any base graphs to build them from yet: even though there are vertices of degree $5$ in the discovered graphs with $\cchi = 20/3$, we cannot add edges to them because they will no longer be $20/3$-edge-colourable. However, the graph $H$ from the previous proof can be used to create graphs with $\cchi = 7$. If we attach a pendant vertex to the single vertex of degree $5$ in $H$, we get a graph $H'$ that has $\cchi(H')=7$ (verified with a computer). Application of Lemma~\ref{lemma:regularization} to $H'$ yields the following theorem.

\begin{theorem}
There exist infinitely many $2$-connected $6$-regular graphs $G$ with $\cchi(G) = 7$.
\label{thm:7}
\end{theorem}

\section{Families with large values of $\cchi$ for $\Delta = 5$}

\begin{theorem}
There exist infinitely many $2$-connected $5$-regular graphs $G$ with $\cchi(G) = 6$.
\label{thm:6}
\end{theorem}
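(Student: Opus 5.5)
The plan is to reduce Theorem~\ref{thm:6} to Lemma~\ref{lemma:regularization}, in the same way Theorem~\ref{thm:7} was obtained for $\Delta=6$. So it suffices to exhibit one connected graph $H$ with the following properties:
\begin{itemize}
\item $\Delta(H)=5$ and $\cchi(H)=6$;
\item exactly two vertices of degree $1$;
\item no bridges other than the two pendant edges;
\item no cut-vertex except possibly the neighbours of the pendant vertices.
\end{itemize}
The lemma then yields infinitely many $2$-connected $5$-regular graphs with $\cchi=6$. The upper bound $\cchi\le 6$ for these graphs comes from Vizing's theorem; the lower bound comes from containing $H$.

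To find $H$, I would start from a small $\cchi$-critical (or nearly critical) graph with $\Delta=5$ and $\cchi=6$ that has a few vertices of degree below $5$. By Table~\ref{table:maxd05_g03}, such simple graphs already appear on $7$ vertices. The first natural candidate is modelled on the $\Delta=4$ remark about $K_5$. Take $K_6$ minus a perfect matching and similar dense graphs, or take a graph $F$ with $\cchi(F)=6$ having a vertex $x$ of degree $3$ or $4$. Then attach pendant vertices to low-degree vertices so that exactly two leaves result, possibly at two different vertices.

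Attaching pendant edges cannot decrease $\cchi$, so $\cchi(H)=6$ is inherited automatically once $F\subseteq H$. What remains to check is structural:
\begin{itemize}
\item $F$ itself is $2$-connected, so there are no bridges or cut-vertices other than at the attachment points;
\item the degrees after attachment are still at most $5$.
\end{itemize}
If $F$ has only one deficient vertex, I would attach both pendant vertices there, or attach one and add an extra edge elsewhere while keeping $\Delta=5$.

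The main obstacle is certifying $\cchi(F)=6$, that is, excluding every $r<6$. One clean route is the counting bound $\cchi\ge |E|/\alpha'$: a graph on $7$ vertices has $\alpha'=3$, so it suffices that $|E(F)|\ge 18$. For example, take $K_7$ with a $2$-factor-like set of edges removed so that $\Delta=5$ and as many edges as possible remain. A $5$-regular graph on $7$ vertices is impossible, but a graph with six vertices of degree $5$ and one of degree $4$ has $17$ edges, which is insufficient. So counting on $7$ vertices fails, and I would instead use a graph on $7$ vertices with $\cchi=6$ found by computer (Table~\ref{table:maxd05_g03}, order $7$ has value $6$), as the paper does for $H'$ in the $\Delta=6$ case. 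Failing a hand proof, the fallback is the computer verification of $\cchi(H)=6$, which is cheap for a graph this small. The deficient vertices of that graph then provide the attachment points for the two pendant vertices, after which Lemma~\ref{lemma:regularization} finishes the proof.
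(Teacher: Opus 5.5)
Your reduction is the paper's. Its proof of Theorem~\ref{thm:6} is just Lemma~\ref{lemma:regularization} applied to one explicit graph, $H=$ \texttt{HCRUnbU}, with $\cchi(H)=6$ certified by computer. But that explicit graph is the whole content of the proof, and you never produce it. ``A suitable $F$ found by computer'' is an unproved existence claim, and the concrete routes you sketch toward it fail.

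Table~\ref{table:maxd05_g03} only says that \emph{some} simple $7$-vertex graph with $\Delta=5$ has $\cchi=6$. It does not say that any such graph has room for two pendant vertices.
\begin{itemize}
\item The $17$-edge graph you examine (six vertices of degree $5$ and one of degree $4$, i.e.\ $K_7-(P_3\cup 2K_2)$) has a single unit of spare degree. It accepts only one pendant vertex. In any graph of maximum degree $5$ containing it, at most one edge leaves its vertex set, and that edge is a bridge. So it can never sit inside a $2$-edge-connected $5$-regular graph.
\item Your fallback of hanging both pendant vertices on the only deficient vertex $x$ of some $F$ (for instance $K_6$ with one edge subdivided) destroys $2$-connectivity. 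Every other vertex of $F$ is saturated, so in the lemma's construction each copy of $x$ separates the rest of its copy of $F$ from $G$. In this situation the lemma's conclusion that no cut-vertices remain fails; its argument uses that the two pendant vertices have different neighbours, as they do in the paper's $H$.
\item $K_6$ minus a perfect matching has $\Delta=4$, so it is not a candidate at all.
\end{itemize}

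What is needed is a graph like the paper's. \texttt{HCRUnbU} consists of the $2$-connected $16$-edge graph $K_7-(K_{1,3}\cup P_3)$ with one pendant vertex attached to each of its two deficient vertices. These are the centre of the $K_{1,3}$ (degree $3$) and the centre of the $P_3$ (degree $4$). For $16$ edges on $7$ vertices the counting bound gives only $16/3$, so $\cchi=6$ has to be verified separately; the paper does this by computer for $H$. Exhibiting such a graph and carrying out that verification is exactly the step your proposal leaves open.
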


\begin{proof}
Take the graph $H$ with graph6 code \verb|HCRUnbU| which has $\cchi(H)=6$ (verified by a computer), one vertex of degree $4$, two vertices of degree $1$, and is $2$-edge-connected and without any cut-vertices except the neighbours of vertices with degree $1$. 
Apply Lemma~\ref{lemma:regularization}.
\end{proof}

We have found $382$ multigraphs with $\cchi = 23/4$ (with $10$ or $11$ vertices). Some of them have vertex-connectivity $3$. We managed to generalize into an infinite class only one of the $68$ graphs on $10$ vertices.

\begin{theorem}
There exist infinitely many multigraphs $G$ with $\cchi(G) = 5 + 3/4$, $\Delta(G)=5$, and edge connectivity $1$.
\label{thm:23/4}
\end{theorem}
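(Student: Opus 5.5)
The plan is to copy the proof of Theorem~\ref{thm:20/3}: start from one computer-found multigraph and glue copies of it together along bridges. The key point is that circular chromatic index does not increase when blocks are joined by bridges, as long as no degree goes above $\Delta$.

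From the $68$ multigraphs on $10$ vertices with $\cchi = 23/4$, I would pick one, $H$, that has a vertex $w$ of degree $1$. Failing that, I would take one with a vertex of degree at most $4$ to which a pendant edge can be attached without changing $\cchi$; this would be checked by computer, just as for $H'$ in Theorem~\ref{thm:7}. For $k\ge 3$, let $G_k$ be a cycle $C_k$ with $k$ copies of $H$ attached, the pendant vertex of the $i$-th copy identified with the $i$-th cycle vertex. Each cycle vertex then has degree $3\le 5$, so $\Delta(G_k)=5$. The edges joining the copies to the cycle are bridges, so the edge connectivity is $1$.

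The lower bound is immediate: $H\subseteq G_k$, so $\cchi(G_k)\ge 23/4$.

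For the upper bound I would fix a circular $23/4$-edge-colouring $c$ of $H$ and let $x$ be the colour of the pendant edge. Rotating the colour circle keeps a colouring valid, so I may assume every copy gives its pendant edge the same colour $x$. It remains to colour the edges of $C_k$. At a cycle vertex $v_i$ the constraints are:
\begin{itemize}
\item the two cycle edges at $v_i$ must be at circular distance at least $1$ from each other;
\item each of them must be at distance at least $1$ from $x$.
\end{itemize}
The colours at distance at least $1$ from $x$ form an arc of length $23/4-2=15/4$. This arc is a circle-arc with enough room to 3-edge-colour the cycle: pick colours $x+1$, $x+2$, $x+3$, which are pairwise at distance at least $1$ and all at distance at least $1$ from $x$ (since $x+3$ is at distance $11/4\ge 1$ from $x$ on the other side). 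A cycle is $3$-edge-colourable, so this finishes the colouring and gives $\cchi(G_k)\le 23/4$. The graphs $G_k$ are pairwise non-isomorphic, so there are infinitely many of them.

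The main obstacle is finding a suitable $H$. If no graph with $\cchi=23/4$ has a vertex of degree $1$, attaching a pendant edge might raise $\cchi$, as happened for $20/3\to 7$. In that case I would instead glue two copies of an $H$ with a vertex $u$ of degree at most $4$ by a single bridge $u_1u_2$, or chain such copies along a path. A colouring of $H$ leaves an admissible arc of positive length at $u$, and after rotating the second copy the two arcs can be made to coincide, so the bridge can be coloured there. This needs $\deg u\le 4$ and a check that the free arc at $u$ is nonempty in some optimal colouring. That check is the step I would verify by computer on the $68$ candidates.
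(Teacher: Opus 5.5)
Your proposal is correct and is essentially the paper's proof. The paper also hangs copies of a computer-found $H$ on the vertices of a cycle, colours all copies identically after a rotation that gives every pendant edge the same colour, and colours the cycle from the remaining arc; its $H$ has $10$ vertices and a vertex of degree $1$, so your fallback is unnecessary. The only difference is that the paper restricts to even cycles $C_{2k}$ and uses the two colours $x+1$, $x+2$, whereas your third colour $x+3$ (at distance $11/4$ from $x$) makes odd cycles work as well.
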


\begin{proof}
There exists a $\cchi$-critical multigraph $H$ with $\cchi(H)=23/4$ which we obtained during the above-described exhaustive search. It has sparse6 code \verb|:Ig?COaaGS?aEQ?QDL?PAe| and has a vertex of degree $1$. If we take a cycle $C$ of length $2k$ for some $k\ge 2$, we can take $2k$ copies of $H$ and identify each vertex of $C$ with the vertex of degree $1$ in exactly one of the copies of $H$. Since the colouring of the copies of $H$ can be the same and shifted in such a way that the edge incident with a vertex of $C$ is $0$, we only need colours $1$ and $2$ for the edges in $C$ (since it is even). The resulting graph thus has $\cchi = 23/4$.
\end{proof}

\begin{theorem}
There exist infinitely many $2$-connected graphs $G$ with $\Delta(G) = 5$ and $\cchi(G) = 5 + 2/3$.
\label{thm:17/3}
\end{theorem}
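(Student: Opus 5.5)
The plan is to follow the pattern of the proofs of Theorems~\ref{thm:20/3} and \ref{thm:23/4}: take a small base graph found by the exhaustive search and glue copies of it together so that the circular chromatic index is preserved. This time the gluing must be done so that the result is $2$-connected rather than having bridges. From Table~\ref{table:maxd05_g03}, the value $17/3$ first appears for simple graphs on $11$ vertices. So I would select, among those graphs, a $\cchi$-critical (or at least suitable) graph $H$ with $\Delta(H)=5$, $\cchi(H)=17/3$ and exactly two vertices $u,v$ of degree $1$, chosen so that $H$ minus its pendant vertices is $2$-connected.

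If no such $H$ exists, I would instead take a $2$-connected graph with a vertex of degree $\le 3$ and split off one or two edges there, keeping $\cchi=17/3$ (checked by computer). Since $17/3<6$, the construction cannot rely on Vizing's bound for the upper bound, unlike Lemma~\ref{lemma:regularization}, so the colouring must be extended by hand.

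\textbf{Construction.} For $k\ge 2$ take $k$ copies $H_0,\dots,H_{k-1}$ of $H$ with pendant vertices $u_i,v_i$. Identify $v_i$ with $u_{i+1}$ (indices mod $k$) and suppress the resulting vertex of degree $2$, so that the pendant edge of $H_i$ at $v_i$ and the pendant edge of $H_{i+1}$ at $u_{i+1}$ merge into one edge $e_i$ joining the two cores. The resulting graph $G_k$ is a ``necklace'' of cores, and it is $2$-connected: each core is $2$-connected, and cores are joined cyclically by single edges at distinct attachment vertices. Its maximum degree is $5$, since no degree increases.

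\textbf{Lower bound.} $G_k$ contains $H$ minus one pendant edge. For the bound $\cchi(G_k)\ge 17/3$, I would therefore either require that $H$ stays $17/3$-tight after deleting one pendant edge, or, cleaner, pick $H$ so that suppressing the degree-$2$ vertex does not matter. To arrange the latter, keep the degree-$2$ vertices (do not suppress them) and choose $H$ such that $\cchi(H)=17/3$ is realized with the pendant edges present. The lower bound then follows from $H\subseteq G_k$ directly.

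\textbf{Upper bound.} Fix a $17/3$-edge-colouring $c$ of $H$ and let $a=c(uu')$, $b=c(vv')$ be the colours of the pendant edges. Colour every copy with $c$ rotated by some shift $s_i$, so that the two edges meeting at the degree-$2$ vertex $v_i=u_{i+1}$ get colours $b+s_i$ and $a+s_{i+1}$. These must be at circular distance $\ge 1$. Taking all $s_i=0$ works whenever $d_{17/3}(a,b)\ge 1$. If instead $a$ and $b$ are forced to be close, I would alternate reflected colourings $x\mapsto -x$, or choose shifts with $s_{i+1}-s_i$ constant, so that the cyclic constraint closes up for every $k$ (or for all even $k$, which still gives infinitely many graphs).

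\textbf{Main obstacle.} The hard part is selecting $H$: the suitable candidates must be screened by computer for $2$-connectivity of the core and for admitting a colouring with $d_{17/3}(a,b)\ge 1$ or a compatible shift/reflection. Everything else is routine.
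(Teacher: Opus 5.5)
Your argument is correct once a suitable $H$ is exhibited. The paper's proof rests on the same kind of computer-verified input: the $12$-vertex graph \texttt{K?BcqyYfStG?}, whose pendant vertices hang on distinct vertices of a $2$-connected $10$-vertex core. Where you differ from the paper is in how the copies are glued.

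The paper suppresses the degree-$2$ vertices as in Lemma~\ref{lemma:regularization}. Consecutive cores are then joined by a single edge whose colour must agree from both sides. It therefore takes $k$ even and colours the second copy of each neighbouring pair by a mirror image of the first (e.g.\ $x\mapsto a+b-x$, which swaps the pendant colours $a,b$). This makes the inner edge of each pair consistent and gives both outer edges the same colour, so the pairs can be concatenated.

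You keep the degree-$2$ vertices, so each junction only needs $d_{17/3}(b+s_i,a+s_{i+1})\ge 1$. A constant rotation step $t$ then works for every $k\ge 2$: the admissible $t$ form an arc of length $17/3-2=11/3$, and closing up needs $kt$ to be a multiple of $17/3$, i.e.\ $t$ among points spaced $17/(3k)\le 17/6<11/3$ apart, so one lies in the arc. You should state this check rather than just assert that the cyclic constraint closes up. With it, your case split on $d_{17/3}(a,b)$ becomes unnecessary. Note that the paper's reflection would fail in your setting, since it makes the two edges at a degree-$2$ vertex equal.

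Your route gives all $k\ge 2$ with a trivial gluing, at the cost of extra degree-$2$ vertices (your graphs are subdivisions of the paper's). The paper's route avoids those vertices at the cost of a parity restriction and the mirror trick.

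Finally, your claim that the suppressed $G_k$ contains only $H$ minus a pendant edge is wrong. The core of $H_i$ together with its two connecting edges is a full copy of $H$, because those edges end at two distinct vertices of neighbouring cores (for $k=2$ this uses that the pendant vertices of $H$ have different neighbours). That is why the paper calls $\cchi(G)\ge 17/3$ obvious; keeping the degree-$2$ vertices matters only for your upper bound.
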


\begin{proof}
Take the graph $H$ with graph6 code \verb|K?BcqyYfStG?| which has $\cchi(H)=17/3$ (verified by a computer). It has two vertices $u$ and $v$ of degree $1$ and no cut vertices except the neighbours of $u$ and $v$. Let us apply a circular construction analogous to the one used in the proof of Lemma~\ref{lemma:regularization} for some even $k\ge 2$, obtaining a graph $G$. Obviously $G$ is $2$-connected and $\cchi(G)\ge 17/3$. Since the number of copies of $H$ is even, we can form disjoint pairs from neighbouring copies. In every such pair $H_1$, $H_2$, we can use any $17/3$-edge-colouring of $H_1$, and apply its ``mirror image'' to $H_2$, resulting in a colouring in which the two edges separating $H_1\cup H_2$ from the rest of $G$ have the same colour. Colourings of individual pairs can thus be pieced together and $G$ is $17/3$-edge-colourable.
\end{proof}

It is not clear to us how to create $5$-regular (multi)graphs with $\cchi = 17/3$. There are vertices of degree less than $5$ in all small graphs and multigraphs with $\cchi = 17/3$, yet typically attaching even a single edge to them increases $\cchi$ to 6.

\section{Families with large values of $\cchi$ for $\Delta = 4$}

\begin{theorem}
There exist infinitely many $2$-connected $4$-regular graphs $G$ with $\cchi(G) = 5$.
\label{thm:5}
\end{theorem}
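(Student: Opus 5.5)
The natural route is to apply Lemma~\ref{lemma:regularization} with $\Delta = 4$, exactly as in the proofs of Theorems~\ref{thm:7} and \ref{thm:6}. This requires a connected base graph $H$ with $\Delta(H)=4$ and $\cchi(H)=5$. It must have exactly two vertices of degree $1$, no bridges other than the two pendant edges, and no cut-vertex apart from the neighbours of the pendant vertices.

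The most promising source for $H$ comes from the discussion after the conjecture on $K_{d+1}$ minus an edge. Removing an edge $xy$ from $K_5$ gives $\cchi = 9/2$, but attaching a pendant vertex to one of the degree-$3$ vertices restores $\cchi = 5$. So I would take $K_5 - xy$ and attach a pendant vertex $u$ to $x$ and a pendant vertex $v$ to $y$. The resulting graph $H$ has $\Delta = 4$ and exactly two vertices of degree $1$. It contains $K_5 - xy$ plus one pendant edge at $x$ as a subgraph, so $\cchi(H)\ge 5$, and Vizing's theorem gives $\cchi(H)\le 5$.

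The structural hypotheses of the lemma are easy to check. $K_5 - xy$ is $3$-connected, so it has no bridges and no cut-vertices. The only bridges of $H$ are therefore $xu$ and $yv$, and the only cut-vertices are $x$ and $y$, which are adjacent to the pendant vertices. Lemma~\ref{lemma:regularization} then yields infinitely many $2$-connected $4$-regular graphs with $\cchi = 5$. Since $H$ already has all non-pendant vertices of degree $4$, the chain construction in the lemma produces a $4$-regular graph directly after suppressing the degree-$2$ vertices, so the regularisation step with $K_5$ gadgets is not even needed.

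The main obstacle is certifying the lower bound $\cchi(H) \ge 5$, that is, showing that $K_5 - xy$ with one pendant edge at $x$ has no $r$-edge-colouring for any $r<5$. The paper states this fact (verified computationally), and I would cite it. If a self-contained argument were wanted, I would argue as follows. $K_5 - xy$ has $9$ edges and matching number $2$, so any $r$-edge-colouring with $r<5$ must have $r \ge 9/2$ and be tight around $x$. One would then show that the extra edge at $x$ cannot be inserted; this case analysis is where the real work lies. Failing that, one could instead take a computer-found small $H$ directly from the exhaustive search (e.g.\ a $\cchi$-critical $4$-regular graph with two edges cut into dangling pendant edges) and verify the hypotheses of the lemma by hand.
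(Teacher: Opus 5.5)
Your proof is correct and follows the paper's route: apply Lemma~\ref{lemma:regularization} to $K_5$ minus an edge with pendant vertices attached, taking $\cchi \ge 5$ from the cited fact that a single pendant at a degree-$3$ vertex already forces $\cchi = 5$, and taking $\cchi \le 5$ from Vizing. Attaching pendants at both $x$ and $y$, as you do, is what the lemma literally requires, since the paper's $H'$ has only one vertex of degree $1$; your remark that the chain construction is already $4$-regular, so no gadgets are needed, is also accurate.
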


\begin{proof}
Let $H$ be the graph obtained from $K_5$ by removing an edge. It was proved in \cite[Theorem 3]{bakalarka} that $H'$ obtained from $H$ by adding a pendant vertex attached to one of the vertices of degree $3$ has $\cchi = 5$. Apply Lemma~\ref{lemma:regularization} to $H'$.
\end{proof}

\begin{theorem}
There exist infinitely many multigraphs $G$ with $\cchi(G) = 4 + 3/4$, $\Delta(G)=4$, vertex connectivity $1$ and edge connectivity $2$.
\end{theorem}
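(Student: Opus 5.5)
The plan is to follow the pattern of Theorems~\ref{thm:23/4} and \ref{thm:17/3}: take a small computer-found base multigraph $H$ with $\cchi(H)=19/4$ and $\Delta(H)=4$, glue copies of it into an infinite family, and check that $\cchi$ neither drops nor rises. According to Table~\ref{table:maxd04_g02}, $19/4$ first appears among multigraphs on $11$ vertices. From the exhaustive list I would pick a $\cchi$-critical multigraph $H$ with $\cchi(H)=19/4$, no bridges, and two vertices $u$, $v$ of degree $2$ (or of degree less than $4$), where $u$ and $v$ can serve as attachment points.

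\textbf{Construction.} For an even $k\ge 2$, take $k$ copies $H_1,\dots,H_k$ of $H$. Identify $v_i$ with $u_{i+1}$ (indices modulo $k$), so that consecutive copies share a single vertex. Each shared vertex is then a cut-vertex, which gives vertex connectivity $1$. Each copy has no bridges and the copies form a ring, so every edge lies on a cycle. A cut separating one copy from the rest must cut two edges, at $u_i$ and at $v_i$ respectively, and the minimum edge cut has size $2$. The merged vertex has degree $\deg(u)+\deg(v)$, so I need $\deg(u)+\deg(v)\le 4$ to keep $\Delta=4$. Alternatively, I would use the variant that uses the pendant-edge version as in Lemma~\ref{lemma:regularization}: $H$ has two vertices of degree $1$, and identifying them and suppressing the resulting vertex of degree $2$ yields edge connectivity $2$. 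Vertex connectivity $1$ then comes from the cut-vertices adjacent to the pendant vertices in $H$.

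\textbf{Lower bound.} $G$ contains $H$ (or, in the pendant version, a subgraph isomorphic to $H$ up to suppressed vertices of degree $2$). Since $\cchi$ is monotone under subgraphs, $\cchi(G)\ge 19/4$.

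\textbf{Upper bound.} This is the key step, and I would use the mirror trick from Theorem~\ref{thm:17/3}. Fix a $19/4$-edge-colouring $c$ of $H$. Colour $H_{2j-1}$ by $c$ and $H_{2j}$ by the mirror image of $c$, meaning the reflection of the colour circle that swaps the roles of $u$ and $v$. This works if $H$ has an automorphism exchanging $u$ and $v$; otherwise I would use $c$ composed with a reflection of the circle chosen so that the edge at the junction gets colours compatible with the neighbouring copy. Under this choice the two edges leaving a pair $H_{2j-1}\cup H_{2j}$ receive the same colour, so the pairs can be glued consecutively. I also have to check the shared vertex inside a pair, where the edges from both copies meet. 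There the edges must be at circular distance at least $1$. Reflecting the circle about a point far from the colour used at the junction achieves this when $c$ has the junction edges at distinct positions, and I would verify this by computer for the specific $H$.

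The main obstacle is this compatibility at the junctions. Since $19/4$ is close to $5$, the slack is small, and the precise choice of $H$ matters. I would search the $11$-vertex examples for one whose $19/4$-colourings allow the gluing, and verify the pair gadget computationally. The pair has at most about $22$ vertices, which is feasible since it only requires exhibiting a colouring.
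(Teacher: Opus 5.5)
\textbf{There is a genuine gap in the upper bound.} Your skeleton matches the paper: a ring of copies of a computer-found $H$ with $\cchi(H)=19/4$, with the lower bound by containment. But the upper bound is the only step that needs an argument, and you leave it as ``search for an $H$ whose colourings glue and verify by computer''. Even the existence of an $H$ with your required properties (two vertices of degree $2$, no bridges) is assumed rather than established.

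In your primary construction the proposed mechanism does not work. Suppose you identify $v_i$ with $u_{i+1}$ without suppressing the vertex. If $u,v$ are pendant, the reflection swapping $c(a)$ and $c(b)$ gives the two edges at the identified vertex the \emph{same} colour, $c(b)=c'(a)$. If $\deg u=\deg v=2$, you are left with an unverified condition on four colours at mutual distance at least $1$ on a circle of length $19/4$, which has almost no slack.

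The missing idea is to build slack into the junction so that \emph{any} colourings of the copies extend. The paper keeps the pendant vertices and adds a new edge $e_i=v_iu_{i+1}$. This edge only has to avoid the colours of the two pendant edges next to it. The longer of the two arcs these colours cut from the circle has length at least $19/8>2$, so a colour for $e_i$ always exists. This works for every $k\ge 2$, with no parity condition, no mirroring, no automorphism, and no computation beyond $\cchi(H)$.

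Your suppressed variant can also be rescued without any search. Colour the even copies by $c(a)+c(b)-c$, where $a,b$ are the pendant edges; no automorphism of $H$ is needed. The merged edges then get the consistent colours $c(b)$ and $c(a)$ alternately, and there is no shared vertex to check.

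\textbf{Your connectivity claims are also wrong as stated.} In a ring of $k\ge 2$ copies, deleting one junction vertex breaks only one link, and the copies stay connected as a path. So the shared vertices are not cut vertices.

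Likewise, if the two pendant vertices of $H$ have distinct neighbours, those neighbours lie on the ring cycle and stop being cut vertices. This is the mechanism behind the $2$-connectivity claim in Lemma~\ref{lemma:regularization}. Vertex connectivity $1$ therefore has to come from the internal structure of $H$. In the paper's $H$ (decode its sparse6 string), both pendant vertices are adjacent to one common vertex, which still separates the rest of its copy in $G_k$.

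Similarly, in your degree-$2$ identification version, cutting the ring at a junction costs two edges per junction. The $2$-edge cut you claim is therefore not there unless $H$ itself supplies one.
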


\begin{proof}
There is a multigraph $H$ with $\cchi(H)=19/4$ (sparse6 code \verb|:LiAGWAJ?XApEOsPcL?XAv|); we obtained it from a graph found by the exhaustive search by attaching two pendant vertices and verified $\cchi$ using a computer.

We can take an arbitrary number $k\ge 2$ of copies of $H$, denote $u_i$ and $v_i$ the two vertices of degree $1$ in the $i$-th copy for each $i\in\{0,1,\dots,k-1\}$, arrange the copies in a circular fashion and add an edge $e_i = v_iu_{i+1}$ (indices modulo $k$). The resulting graph $G_k$ has $\cchi\ge 19/4$ (thanks to any of the copies of $H$) and is obviously $2$-edge-connected. It also has a $19/4$-edge-colouring. Indeed, all the copies of $H$ are $19/4$-edge-colourable, and their colourings put together form a partial colouring $\varphi$. For each $i$, the colouring $\varphi$ can be extended to $e_i$: on the colour circle of length $19/4$, the length of the longer arc between the colours of the two edges incident with $e_i$ is at least $19/8 > 2$, so there is a sufficiently long interval from which we can pick a colour for $e_i$.
\end{proof}


We are not aware of any simple graphs with $\cchi = 4 + 3/4$.

We have found $40$ simple graphs $G$ with $\cchi(G) = 14/3 = 4 + 2/3$, but all are $4$-regular and thus are not readily generalizable into infinite classes using the techniques applied elsewhere in this paper. Most (though not all) of them have a cut-vertex. If we split them at the cut-vertex (keeping a copy of the cut-vertex in each part), $\cchi$ of the parts would be at most $9/2$. More details can be found in \cite{bakalarka}, but in summary, we do not sufficiently understand why these graphs have $\cchi = 14/3$ (and only these graphs among the searched ones).

\begin{theorem}
There exist infinitely many $2$-connected $4$-regular multigraphs $G$ with $\cchi(G) = 4 + 2/3$.
\end{theorem}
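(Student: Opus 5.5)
The plan follows the pattern of the proofs of Theorems~\ref{thm:23/4} and \ref{thm:17/3}: start from a computer-found base multigraph $H$ with $\cchi(H)=14/3$ and glue copies of it in a cyclic fashion. The key difference from Theorem~\ref{thm:17/3} is that the result must be $4$-regular and $2$-connected.

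For the base, I would take a multigraph $H$ found by the exhaustive search (Table~\ref{table:maxd04_g02}) with the following properties:
\begin{itemize}
\item $\cchi(H)=14/3$ and $\Delta(H)=4$;
\item exactly two vertices $u,v$ of degree $1$, all other vertices of degree $4$;
\item no cut-vertex other than the neighbours of $u$ and $v$.
\end{itemize}
Such an $H$ can typically be obtained from a $4$-regular $14/3$-graph by cutting one edge into two dangling edges, with $\cchi$ preserved and checked by computer. I would specify $H$ by its sparse6 code.

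The construction is then as follows:
\begin{enumerate}
\item Take an even number $k\ge 2$ of copies $H_0,\dots,H_{k-1}$.
\item Identify $v_i$ with $u_{i+1}$ (indices modulo $k$) and suppress the resulting degree-$2$ vertex.
\end{enumerate}
The resulting multigraph $G_k$ is $4$-regular. It has no cut-vertex, because the former cut-vertices next to $u_i,v_i$ now lie on the cycle through all copies. It also contains every copy of $H$ minus its pendant vertices, joined through the other copies.

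For the lower bound, the claim is that $G_k$ contains $H$ as a subgraph. Inside $G_k$, the edge from the neighbour of $v_0$ leaves $H_0$ and enters $H_1$, so $H$ appears: keep the two outgoing edges and truncate them at their endpoints in the neighbouring copies. Hence $\cchi(G_k)\ge 14/3$.

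For the upper bound I would use the mirror trick from the proof of Theorem~\ref{thm:17/3}:
\begin{enumerate}
\item Pair consecutive copies $H_{2j},H_{2j+1}$.
\item Colour $H_{2j}$ by a fixed $14/3$-edge-colouring $\varphi$ of $H$.
\item Colour $H_{2j+1}$ by the mirror image of $\varphi$, with $u$ and $v$ swapped.
\end{enumerate}
This needs an automorphism of $H$ exchanging $u$ and $v$, or simply building $H$ symmetrically. Then the edge merged from $v_{2j}u_{2j+1}$ is consistent. The two edges leaving the pair carry the same colour $\varphi(uu')$, up to a rotation. Rotate each pair's colouring so these boundary colours agree across pairs.

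A cleaner way to avoid needing the symmetry is to choose $H$ and a colouring $\varphi$ with $\varphi(uu')=\varphi(vv')$. Then all copies can be identically coloured and $k$ can be arbitrary.

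The main obstacle is the existence of the base graph $H$. We need a $14/3$ graph that stays $14/3$-colourable after cutting an edge into pendant edges whose colours can be matched, while remaining non-$9/2$ and keeping the no-cut-vertex condition. The paper notes that simple examples are all $4$-regular and have cut-vertices. So I would search among the multigraphs on $11$–$13$ vertices with $\cchi=14/3$: cut each edge in turn, and test by SAT both $\cchi=14/3$ and the existence of a colouring with equal pendant colours. This is where computer verification is essential.
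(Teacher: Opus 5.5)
\textbf{Verdict: the architecture matches the paper, but the upper-bound step rests on a misidentified requirement.} Your plan follows the paper's: a computer-found $H$ with two pendant vertices $u,v$ and all other vertices of degree $4$, an even number of copies glued cyclically ($v_i$ identified with $u_{i+1}$, then suppressed), the lower bound from a copy of $H$ inside $G$, and an upper bound from ``mirrored'' colourings of consecutive pairs.

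\textbf{The gap.} You assert that the mirror trick needs an automorphism of $H$ exchanging $u$ and $v$, or else a colouring with $\varphi(uu')=\varphi(vv')$. You then defer the existence of such an $H$ to a SAT search you have not carried out. Neither property is needed. As written, however, your proof depends on a base graph that nobody has exhibited, so it does not yet establish the theorem.

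\textbf{The missing idea.} The mirror image should be taken on the colour circle, not on the graph. Take any $14/3$-edge-colouring $\varphi$ of $H$ and set $a=\varphi(uu')$, $b=\varphi(vv')$. The map $\psi(e)=a+b-\varphi(e) \pmod{14/3}$ preserves all circular distances, so it is again a valid colouring of $H$, and it satisfies $\psi(uu')=b$ and $\psi(vv')=a$. Colour $H_{2j}$ by $\varphi$ and $H_{2j+1}$ by $\psi$. Then:
\begin{itemize}
\item the edge $v'_{2j}u'_{2j+1}$ receives $b$ from both sides;
\item the edge $v'_{2j+1}u'_{2j+2}$ receives $a$ from both sides.
\end{itemize}
So the same colouring on every pair glues into a $14/3$-edge-colouring of $G$, with no rotation needed. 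Equivalently, glue alternate copies in reversed orientation ($v$ to $v$, $u$ to $u$) and use $\varphi$ everywhere. This is what the paper means by a ``very similar argument'' to Theorem~\ref{thm:17/3}, where ``any'' colouring of $H_1$ is used.

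\textbf{Consequences for the rest of your proposal.} With the reflection, any $H$ having the three properties you list works. The paper simply uses the $9$-vertex multigraph found directly by the exhaustive search: two vertices of degree $1$, seven of degree $4$, and $\cchi=14/3$, given by its sparse6 code. Your ``main obstacle'' paragraph is therefore unnecessary: there is no need to cut edges of larger $14/3$-multigraphs or to search for colourings with matching pendant colours. The remaining parts of your argument coincide with the paper's:
\begin{itemize}
\item $4$-regularity;
\item $2$-connectivity, from the absence of cut-vertices other than the neighbours of $u$ and $v$;
\item the lower bound, via the copy of $H$ formed by $H_0-\{u_0,v_0\}$ together with its two outgoing edges.
\end{itemize}
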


\begin{proof}
Our search discovered a multigraph $H$ with sparse6 code \verb|:Hg?COoAI?QDeOhn| such that $\cchi(G) = 14/3$ and $G$ has two vertices of degree $1$ and $7$ vertices of degree $4$. Let us apply a circular construction analogous to the one used in the proof of Theorem~\ref{thm:17/3} for even $k$, obtaining a graph $G$. By a very similar argument, $G$ is $2$-connected, $4$-regular, and satisfies $\cchi(G) = 14/3$.
\end{proof}

We have discovered two $4$-regular multigraphs with $\cchi = 23/5$ (order $13$ and $14$; sparse6 codes \verb|:Lk??G`CIGPb_O`IDIUOqqEX^| and \verb|:Mk??G`CIGPb_Q`QdgOiXBLGYU^|). This is the first known value of $\cchi$ above $\Delta +1/2$ that is not in the form of $\Delta+1-1/t$.

\begin{theorem}
There exist infinitely many $4$-connected $4$-regular graphs $G$ with $\cchi(G) = 9/2$.
\label{thm:circulants}
\end{theorem}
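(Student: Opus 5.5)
The natural family is suggested by the label: the circulants $C_{2n+1}(1,2)$. For odd order $2n+1\ge 9$ they are $4$-regular, $4$-connected, and have $\cchi = 9/2$. The case $n=2$ is $K_5$ and the case $n=3$ has $\cchi=5$, as noted above, so both are excluded. If this turns out false for some $n$, the fallback is the circulants $C_{4m+1}(1,2)$, with $n=2m$ even, or another explicit $4$-connected family.

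For $4$-connectivity I will rely on the standard fact that the circulant $C_N(1,2)$ with $N\ge 5$ is $4$-connected. Concretely, removing any three vertices from the cyclic sequence leaves it connected: with steps of $1$ and $2$, every gap in the sequence has length at most $2$, except possibly where two removed vertices are consecutive. A short case check on how the three removed vertices sit on the cycle settles this.

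The lower bound $\cchi\ge 9/2$ comes from counting. The graph has $N=2n+1$ vertices and $2N$ edges, so its maximum matching has size $n$. Then
\[
\cchi(G)\ge \frac{|E(G)|}{\alpha'(G)} = \frac{2(2n+1)}{n} = 4+\frac{2}{n}.
\]
This is too weak once $n\ge 5$. Instead I use $\chi'(G)=5$, since $\chi'(G)\ge |E|/n>4$, together with the fact that $\chi'=\lfloor \cchi\rfloor$; this gives $\cchi>4$. Any value in $(4,9/2)$ must then be excluded. For a $p/q$-colouring with $p/q<9/2$, the edges coloured in a half-open arc of length $1$ form a matching, so each such matching has at most $n$ edges. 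Summing over the $p$ positions of a $(p,q)$-colouring gives $2N\cdot q\le p\cdot n$, which is again the same weak bound.

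A stronger argument therefore uses the local structure. The vertices $i,i+1,i+2$ form triangles, and each vertex lies in three triangles. In a $p/q$-colouring with $p/q<9/2$, the four colours at a vertex are pairwise at distance $\ge 1$ on a circle of length $<9/2$. I would show that the triangle structure forces a tight cycle, or equivalently a contradiction when propagating colours around the circulant. This is the main obstacle. My first attempt will be to adapt the known argument that no graph has $\cchi\in(4-1/3,4)$ \cite{afshani}, or to reuse the proof from \cite{bakalarka}, which presumably computes $\cchi$ of these circulants. Failing that, I will verify the bound by a computer for small $n$ and find an inductive gadget.

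For the upper bound I will construct an explicit $9/2$-edge-colouring, equivalently a $(9,2)$-colouring: colours in $\{0,\dots,8\}$ with adjacent edges differing by at least $2$ modulo $9$. I build it periodically along the cycle, using one pattern for the edges $\{i,i+1\}$ and $\{i,i+2\}$ over a block of a few consecutive indices. A short ``patch'' block then absorbs the parity of $2n+1$. Since a finite number of block lengths suffices, for example blocks of lengths $4$ and $5$, every sufficiently large $N$ is covered, and this gives infinitely many graphs.
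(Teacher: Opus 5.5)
\textbf{There is a genuine gap: the lower bound is not proved.} The heart of the theorem is $\cchi(C_{2n+1}(1,2))\ge 9/2$ for infinitely many $n$. You correctly note that edge counting only gives $4+2/n$. But you then leave this as ``the main obstacle'', with vague adaptations of other arguments, and computer checks for small $n$ cannot yield an infinite family. The missing idea is a local alignment lemma combined with parity:
\begin{itemize}
\item In a $(4+\varepsilon)$-edge-colouring $\varphi$ with $\varepsilon<1/2$, all gaps between consecutive colours at a vertex lie in $[1,1+\varepsilon]$. So, going around the circle, the $m$-th colour after a colour $x$ at that vertex lies in $x+[m,m+\varepsilon]$.
\item Put $g_j=v_jv_{j+2}$ and shift so that $\varphi(g_{j+1})=0$. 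Looking at $v_{j+1}$ and $v_{j+3}$, the triangle edges $v_{j+1}v_{j+2}$ and $v_{j+2}v_{j+3}$ get colours in $[k,k+\varepsilon]$ and $[k',k'+\varepsilon]$ for distinct $k,k'\in\{1,2,3\}$.
\item At $v_{j+2}$, with $x=\varphi(v_{j+1}v_{j+2})$, the slot $x+[4-k,4-k+\varepsilon]\subseteq[4,4+2\varepsilon]$ is within $\varepsilon$ of $0$ on the circle. It cannot hold $v_{j+2}v_{j+3}$, which is adjacent to $g_{j+1}$. So it holds $g_j$ or $g_{j+2}$, and the other of these two is at distance at least $1-\varepsilon>1/2$ from $0$.
\item Hence every $g_j$ is within $\varepsilon$ of exactly one of $g_{j-1},g_{j+1}$. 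These aligned pairs would form a perfect matching of the cycle $g_0g_1\cdots g_{2n}g_0$, which has odd length, a contradiction.
\end{itemize}
This is what the paper proves, via a $12$-case interval analysis and an induction around the circulant that ends with the triangle inequality at $v_1$. Your remark about ``propagating colours around the circulant'' points this way, but it never says what propagates or why odd order kills it.

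\textbf{Two further problems.}
\begin{itemize}
\item Your claim that $\cchi=9/2$ for every odd order $\ge 9$ is false. The paper computes $\cchi(G_9)=\cchi(G_{11})=14/3$ and $\cchi(G_7)=5$, so the family can only start at order $13$.
\item Your upper bound is only a plan, and the proposed ``short patch'' such as a $5$-vertex block cannot exist. A patch placed between copies of a repeatable block has the same boundary colours on both ends, so it closes up on itself into a $(9,2)$-edge-colouring of $C_L(1,2)$. For $L=5$ that would be a $9/2$-edge-colouring of $K_5$. By the values above, every odd patch needs at least $13$ vertices.
\end{itemize}
What works, and what the paper does, is to exhibit explicit $(9,2)$-edge-colourings of $G_{13}$ and $G_{15}$. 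It adds a $4$-vertex segment, using only colours $0,2,4,6$, whose boundary colours match theirs. This segment can be inserted any number of times, covering all orders $13+4k$ and $15+4k$. The $4$-connectivity part of your proposal is fine.
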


The infinite family of graphs substantiating Theorem~\ref{thm:circulants} consists of circulants $C_{2n+1}(1, 2)$ for $n\ge 6$. After defining some helpful terms, we prove the upper and lower bound on $\cchi$ of these circulants separately in a series of lemmas.

Let $G_{2n+1} = C_{2n+1}(1, 2)$ for brevity; the vertex set of $G_{2n+1}$ is $V = \{v_0, v_1, v_2, \dots, v_{2n}\}$ and the edge set is
$$
E = \left\{\, v_iv_{i + 1},\ v_iv_{i + 2} \mid i \in \{0, 1, \dots, 2n\} \,\right\}
$$
(all indices taken modulo $2n+1$ here and throughout the rest of this section).

For $i \in \{0, 2, 4, \dots, 2n\}$, we denote $e_i = v_iv_{i+2}$, $f_i = v_{i+1}v_{i+3}$.
Thanks to the symmetry of the circulants, $f_{2n+t} = v_{2n + 1 + t}v_{2n + 3 + t} \equiv v_tv_{2+t} = e_t$ for any integer $t$.

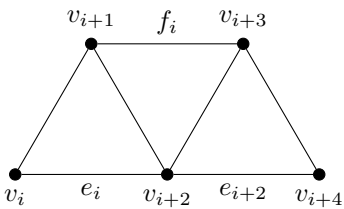
\begin{figure}[h]
    \centering
    \begin{tikzpicture}[
        vertex/.style={circle, draw, fill=black, inner sep=1.5pt},
        scale=1
    ]
    
    \node[vertex, label=below:$v_i$]     (i)   at (-2, 0) {};
    \node[vertex, label=above:$v_{i+1}$]   (i+1) at (-1, 1.732) {}; 
    \node[vertex, label=below:$v_{i+2}$]   (i+2) at (0, 0) {};
    \node[vertex, label=above:$v_{i+3}$]   (i+3) at (1, 1.732) {};
    \node[vertex, label=below:$v_{i+4}$]   (i+4) at (2, 0) {};
    
    \draw (i) -- (i+1);
    \draw (i+1) -- (i+2);
    
    \draw (i+2) -- (i+3);

    \draw (i+3) -- (i+4);
    
    \draw (i)   -- node[midway, below] {$e_i$} (i+2);
    \draw (i+1) -- node[midway, above] {$f_i$} (i+3);
    \draw (i+2) -- node[midway, below] {$e_{i+2}$} (i+4);
    
    \end{tikzpicture}
    \caption{Subgraph $H$ of \( C_{2n+1}(1,2) \) drawn with equilateral triangles.}
    \label{fig:subgraph_H}
\end{figure}

\begin{lemma}
\label{thm:aligned-bound}
Let \( \varepsilon < 1/2 \). For any valid \( (4 + \varepsilon) \)-edge-colouring of \( G_{2n+1} \) and any $i \in \{0, 2, 4, \dots, 2n\}$, one of the pairs of edges $f_i$, $e_i$ and $f_i, e_{i+2}$ has circular distance of colours at most $\varepsilon$ and the other at least $1/2$.
\end{lemma}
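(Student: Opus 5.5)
The plan is to work entirely locally, around the triangle $v_{i+1}v_{i+2}v_{i+3}$ of Figure~\ref{fig:subgraph_H}. Set $r = 4+\varepsilon$, $a = c(v_{i+1}v_{i+2})$, $b = c(v_{i+2}v_{i+3})$ and $x = c(f_i)$.

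\textbf{The basic local fact.} Take any vertex $w$ of degree $4$ in $G_{2n+1}$. The four colours at $w$ are pairwise at circular distance at least $1$. They cut the colour circle into four gaps, each of length at least $1$, and the gaps sum to $4+\varepsilon$. Hence every gap lies in $[1,1+\varepsilon]$. Consequently, two colours at $w$ that are cyclically consecutive have distance in $[1,1+\varepsilon]$. Two colours that are not consecutive are separated by two gaps on each side, so their distance lies in $[2,2+\varepsilon]$; since $\varepsilon<1/2$, these two ranges are disjoint.

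I will apply this fact at three vertices.
\begin{itemize}
\item At $v_{i+2}$, whose edges are $v_{i+1}v_{i+2}$, $v_{i+2}v_{i+3}$, $e_i$ and $e_{i+2}$.
\item At $v_{i+1}$, which is incident with both $f_i$ and $v_{i+1}v_{i+2}$, so $d_r(x,a)\in[1,1+\varepsilon]\cup[2,2+\varepsilon]$.
\item At $v_{i+3}$, which likewise gives $d_r(x,b)\in[1,1+\varepsilon]\cup[2,2+\varepsilon]$.
\end{itemize}

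\textbf{Case 1: $a$ and $b$ are not consecutive around $v_{i+2}$.} Then $e_i$ and $e_{i+2}$ lie in different arcs between $a$ and $b$, and each arc has length $L\in[2,2+\varepsilon]$. The colour $x$ must be at distance at least $1$ from both $a$ and $b$. So $x$ lies in one of these arcs, in the middle subinterval of points at distance at least $1$ from both ends; this subinterval has length $L-2\le\varepsilon$. The $e$-colour lying in that same arc is also at distance at least $1$ from both ends, so it lies in the same subinterval and is within $\varepsilon$ of $x$. For the other $e$-colour, any path along the circle from $x$ to it passes through $a$ or $b$, so its distance from $x$ is at least $2\ge 1/2$.

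\textbf{Case 2: $a$ and $b$ are consecutive around $v_{i+2}$.} Let $g\in[1,1+\varepsilon]$ be the gap between them. Orient the circle so that the order is $b, e, e', a$ along the long arc, which has length $r-g\in[3,3+\varepsilon]$; here $\{e,e'\}$ are the colours of $\{e_i,e_{i+2}\}$. The gap bounds place $e$ in $[b+1,b+1+\varepsilon]$ and $e'$ in $[a-1-\varepsilon,a-1]$. The colour $x$ cannot lie in the short arc between $b$ and $a$, so it lies on the long arc. There the distances along the arc satisfy $d_r(x,b)+d_r(x,a)=r-g\in[3,3+\varepsilon]$. Since each summand is in $[1,1+\varepsilon]\cup[2,2+\varepsilon]$, one summand is in $[1,1+\varepsilon]$ and the other is in $[2,2+\varepsilon]$. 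By symmetry, say $x\in[b+1,b+1+\varepsilon]$. Then $|x-e|\le\varepsilon$. On the other hand $e'\ge e+1\ge b+2$, so $d_r(x,e')\ge 1-\varepsilon>1/2$.

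I expect Case 2 to be the main obstacle. Using only the constraints at $v_{i+2}$, the colour $x$ could sit midway between $e$ and $e'$, at distance about $1/2$ from each, and the lemma would fail. The extra constraints at $v_{i+1}$ and $v_{i+3}$, where $f_i$ meets $v_{i+1}v_{i+2}$ and $v_{i+2}v_{i+3}$, are what force $x$ to one end of the arc. The remaining work is routine bookkeeping: the arc positions need to be handled carefully with wrap-around, and the assumption $\varepsilon<1/2$ must be invoked wherever it is needed.
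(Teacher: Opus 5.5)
Your approach is sound and genuinely different from the paper's, but one step in Case~2 needs a small repair.

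The paper normalises the colour of $f_i$ to $0$. It enumerates the cyclic orders of colours around the vertices of the five-vertex subgraph $H$, which leaves twelve configurations after using the reflection symmetry. It then assigns an interval to each of the seven edges of $H$ in every configuration and reads both distances off the resulting table. You use only the triangle $v_{i+1}v_{i+2}v_{i+3}$ and the rigid gap structure at a vertex of degree $4$, and you split on whether $a$ and $b$ are consecutive at $v_{i+2}$. The four pictures in the paper's figure where $e_{i+2}$ receives $[2,2+\varepsilon]$ are exactly your Case~1, and the other eight are your Case~2. Your route is shorter, does not ask the reader to trust an enumeration, and explains where the bounds $\varepsilon$, $2$ and $1-\varepsilon$ come from. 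The paper's table gives more information, namely explicit intervals for every edge of $H$.

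The step to fix is the identity $d_r(x,b)+d_r(x,a)=r-g$ in Case~2. It holds for the distances $t$ (from $b$) and $s$ (from $a$) measured along the long arc. It does not hold for circular distances, because $d_r(x,b)=\min\{t,\,s+g\}$ may be realised through the short arc. For example, take $\varepsilon=0.4$, $g=1$, $t=2.3$, $s=1.1$. Then $d_r(x,b)=2.1$ and $d_r(x,a)=1.1$, both admissible at $v_{i+3}$ and $v_{i+1}$, yet their sum is $3.2\neq r-g=3.4$. Since the vertex constraints are about circular distances, you cannot apply them to $t$ and $s$ as you do.

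The repair is one line. By symmetry let $t\le s$. Then $d_r(x,b)=\min\{t,s+g\}=t$, and $t\le (r-g)/2\le(3+\varepsilon)/2<2$. So the constraint at $v_{i+3}$ forces $t\in[1,1+\varepsilon]$, which is exactly the placement $x\in[b+1,b+1+\varepsilon]$ you use afterwards. Your remaining estimates then go through unchanged, and $\varepsilon<1/2$ is needed only for the final bound $1-\varepsilon>1/2$.
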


\begin{proof}
Consider the subgraph $H$ induced by the vertices $v_i, v_{i+1}, v_{i+2}, v_{i+3}, v_{i+4}$ (Figure~\ref{fig:subgraph_H}). In any ($4+\varepsilon)$-edge-colouring of $H$, we can shift the colours in such a way that the colour of $f_i$ is $0$. For $H$, there are altogether $24$ possible circular orderings for the colors of the edges around vertices. The ordering matters: for instance, if we know that the colour $c'$ of $v_{i+1}v_{i+2}$ is larger than the colour of $c$ of $v_iv_{i+2}$ and there is no colour of an edge incident with $v_{i+2}$ between them (i.e. $c'$ immediately follows $c$ in the ordering around $v_{i+2}$), we know that $c'\in [c+1, c+1+\varepsilon]$ (this interval represents and arc of the colour circle of length $4+\varepsilon$, so its boundaries are taken modulo $4+\varepsilon$; the interval is derived from the requirement on the circular distance between colours of incident edges). For each possible ordering, each edge gets assigned an interval describing bounds on its possible colour. This technique is a common way of analyzing possible circular edge-colourings of a small subgraph; more details can be found in \cite{ghebleh2007, mazaklukotka, mazakthesis, bakalarka}.
Half of these colourings have the interval containing colour $0$ assigned to the edge $e_i$, and the other half have it assigned to the edge $e_{i+2}$. Thanks to the symmetry of $H$, it is sufficient to analyze the first half.

These intervals are constructed systematically: we first assign an interval to each of the edges $v_{i+1}v_i$, $v_{i+1}v_{i+2}$, $v_{i+3}v_{i+2}$, $v_{i+3}v_{i+4}$, and then determine intervals for $v_iv_{i+2}$ and $v_{i+2}v_{i+4}$.  The columns in Figure~\ref{fig:subgraph colouring} are arranged according to the interval assigned to the edge $v_iv_{i+1}$.

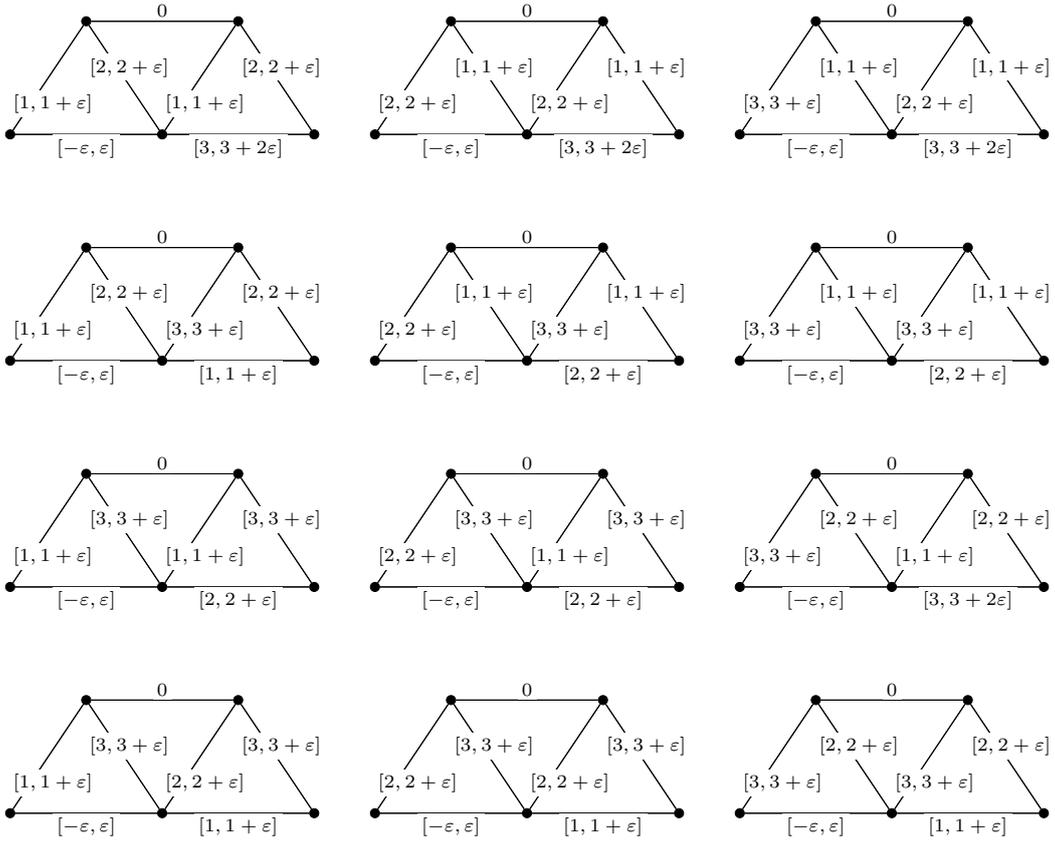
\begin{figure}[h]
    \centering
    \begin{tikzpicture}[
        vertex/.style={circle, draw, fill=black, inner sep=1.2pt},
        edge_label/.style={fill=white, inner sep=1.5pt, font=\scriptsize}
    ]


    \begin{scope}[xshift=0cm, yshift=0cm]
        \node[vertex] (v1) at (0, 1.5) {}; \node[vertex] (v3) at (2, 1.5) {};
        \node[vertex] (v0) at (-1, 0) {};  \node[vertex] (v2) at (1, 0) {};   \node[vertex] (v4) at (3, 0) {};
        \draw (v0)--(v1); \draw (v1)--(v2); \draw (v2)--(v3); \draw (v3)--(v4); \draw (v4)--(v2); \draw (v2)--(v0); \draw (v1)--(v3);
        \draw (v1) -- node[edge_label, midway, above] {$0$} (v3);
        \draw (v0) -- node[edge_label, pos=0.25, left, xshift=25pt] {$[1, 1+\varepsilon]$} (v1);
        \draw (v1) -- node[edge_label, pos=0.4, right, xshift=-12pt] {$[2, 2+\varepsilon]$} (v2);
        \draw (v2) -- node[edge_label, pos=0.25, left, xshift=25pt] {$[1, 1+\varepsilon]$} (v3);
        \draw (v3) -- node[edge_label, pos=0.4, right, xshift=-12pt] {$[2, 2+\varepsilon]$} (v4);
        \draw (v0) -- node[edge_label, midway, below] {$[-\varepsilon, \varepsilon]$} (v2);
        \draw (v2) -- node[edge_label, midway, below] {$[3, 3+2\varepsilon]$} (v4);
    \end{scope}

    \begin{scope}[xshift=4.8cm, yshift=0cm]
        \node[vertex] (v1) at (0, 1.5) {}; \node[vertex] (v3) at (2, 1.5) {};
        \node[vertex] (v0) at (-1, 0) {};  \node[vertex] (v2) at (1, 0) {};   \node[vertex] (v4) at (3, 0) {};
        \draw (v0)--(v1); \draw (v1)--(v2); \draw (v2)--(v3); \draw (v3)--(v4); \draw (v4)--(v2); \draw (v2)--(v0); \draw (v1)--(v3);
        \draw (v1) -- node[edge_label, midway, above] {$0$} (v3);
        \draw (v0) -- node[edge_label, pos=0.25, left, xshift=25pt] {$[2, 2+\varepsilon]$} (v1);
        \draw (v1) -- node[edge_label, pos=0.4, right, xshift=-12pt] {$[1, 1+\varepsilon]$} (v2);
        \draw (v2) -- node[edge_label, pos=0.25, left, xshift=25pt] {$[2, 2+\varepsilon]$} (v3);
        \draw (v3) -- node[edge_label, pos=0.4, right, xshift=-12pt] {$[1, 1+\varepsilon]$} (v4);
        \draw (v0) -- node[edge_label, midway, below] {$[-\varepsilon, \varepsilon]$} (v2);
        \draw (v2) -- node[edge_label, midway, below] {$[3, 3+2\varepsilon]$} (v4);
    \end{scope}

    \begin{scope}[xshift=9.6cm, yshift=0cm]
        \node[vertex] (v1) at (0, 1.5) {}; \node[vertex] (v3) at (2, 1.5) {};
        \node[vertex] (v0) at (-1, 0) {};  \node[vertex] (v2) at (1, 0) {};   \node[vertex] (v4) at (3, 0) {};
        \draw (v0)--(v1); \draw (v1)--(v2); \draw (v2)--(v3); \draw (v3)--(v4); \draw (v4)--(v2); \draw (v2)--(v0); \draw (v1)--(v3);
        \draw (v1) -- node[edge_label, midway, above] {$0$} (v3);
        \draw (v0) -- node[edge_label, pos=0.25, left, xshift=25pt] {$[3, 3+\varepsilon]$} (v1);
        \draw (v1) -- node[edge_label, pos=0.4, right, xshift=-12pt] {$[1, 1+\varepsilon]$} (v2);
        \draw (v2) -- node[edge_label, pos=0.25, left, xshift=25pt] {$[2, 2+\varepsilon]$} (v3);
        \draw (v3) -- node[edge_label, pos=0.4, right, xshift=-12pt] {$[1, 1+\varepsilon]$} (v4);
        \draw (v0) -- node[edge_label, midway, below] {$[-\varepsilon, \varepsilon]$} (v2);
        \draw (v2) -- node[edge_label, midway, below] {$[3, 3+2\varepsilon]$} (v4);
    \end{scope}

    
    \begin{scope}[xshift=0cm, yshift=-3.0cm]
        \node[vertex] (v1) at (0, 1.5) {}; \node[vertex] (v3) at (2, 1.5) {};
        \node[vertex] (v0) at (-1, 0) {};  \node[vertex] (v2) at (1, 0) {};   \node[vertex] (v4) at (3, 0) {};
        \draw (v0)--(v1); \draw (v1)--(v2); \draw (v2)--(v3); \draw (v3)--(v4); \draw (v4)--(v2); \draw (v2)--(v0); \draw (v1)--(v3);
        \draw (v1) -- node[edge_label, midway, above] {$0$} (v3);
        \draw (v0) -- node[edge_label, pos=0.25, left, xshift=25pt] {$[1, 1+\varepsilon]$} (v1);
        \draw (v1) -- node[edge_label, pos=0.4, right, xshift=-12pt] {$[2, 2+\varepsilon]$} (v2);
        \draw (v2) -- node[edge_label, pos=0.25, left, xshift=25pt] {$[3, 3+\varepsilon]$} (v3);
        \draw (v3) -- node[edge_label, pos=0.4, right, xshift=-12pt] {$[2, 2+\varepsilon]$} (v4);
        \draw (v0) -- node[edge_label, midway, below] {$[-\varepsilon, \varepsilon]$} (v2);
        \draw (v2) -- node[edge_label, midway, below] {$[1, 1+\varepsilon]$} (v4);
    \end{scope}

    \begin{scope}[xshift=4.8cm, yshift=-3.0cm]
        \node[vertex] (v1) at (0, 1.5) {}; \node[vertex] (v3) at (2, 1.5) {};
        \node[vertex] (v0) at (-1, 0) {};  \node[vertex] (v2) at (1, 0) {};   \node[vertex] (v4) at (3, 0) {};
        \draw (v0)--(v1); \draw (v1)--(v2); \draw (v2)--(v3); \draw (v3)--(v4); \draw (v4)--(v2); \draw (v2)--(v0); \draw (v1)--(v3);
        \draw (v1) -- node[edge_label, midway, above] {$0$} (v3);
        \draw (v0) -- node[edge_label, pos=0.25, left, xshift=25pt] {$[2, 2+\varepsilon]$} (v1);
        \draw (v1) -- node[edge_label, pos=0.4, right, xshift=-12pt] {$[1, 1+\varepsilon]$} (v2);
        \draw (v2) -- node[edge_label, pos=0.25, left, xshift=25pt] {$[3, 3+\varepsilon]$} (v3);
        \draw (v3) -- node[edge_label, pos=0.4, right, xshift=-12pt] {$[1, 1+\varepsilon]$} (v4);
        \draw (v0) -- node[edge_label, midway, below] {$[-\varepsilon, \varepsilon]$} (v2);
        \draw (v2) -- node[edge_label, midway, below] {$[2, 2+\varepsilon]$} (v4);
    \end{scope}

    \begin{scope}[xshift=9.6cm, yshift=-3.0cm]
        \node[vertex] (v1) at (0, 1.5) {}; \node[vertex] (v3) at (2, 1.5) {};
        \node[vertex] (v0) at (-1, 0) {};  \node[vertex] (v2) at (1, 0) {};   \node[vertex] (v4) at (3, 0) {};
        \draw (v0)--(v1); \draw (v1)--(v2); \draw (v2)--(v3); \draw (v3)--(v4); \draw (v4)--(v2); \draw (v2)--(v0); \draw (v1)--(v3);
        \draw (v1) -- node[edge_label, midway, above] {$0$} (v3);
        \draw (v0) -- node[edge_label, pos=0.25, left, xshift=25pt] {$[3, 3+\varepsilon]$} (v1);
        \draw (v1) -- node[edge_label, pos=0.4, right, xshift=-12pt] {$[1, 1+\varepsilon]$} (v2);
        \draw (v2) -- node[edge_label, pos=0.25, left, xshift=25pt] {$[3, 3+\varepsilon]$} (v3);
        \draw (v3) -- node[edge_label, pos=0.4, right, xshift=-12pt] {$[1, 1+\varepsilon]$} (v4);
        \draw (v0) -- node[edge_label, midway, below] {$[-\varepsilon, \varepsilon]$} (v2);
        \draw (v2) -- node[edge_label, midway, below] {$[2, 2+\varepsilon]$} (v4);
    \end{scope}


    \begin{scope}[xshift=0cm, yshift=-6.0cm]
        \node[vertex] (v1) at (0, 1.5) {}; \node[vertex] (v3) at (2, 1.5) {};
        \node[vertex] (v0) at (-1, 0) {};  \node[vertex] (v2) at (1, 0) {};   \node[vertex] (v4) at (3, 0) {};
        \draw (v0)--(v1); \draw (v1)--(v2); \draw (v2)--(v3); \draw (v3)--(v4); \draw (v4)--(v2); \draw (v2)--(v0); \draw (v1)--(v3);
        \draw (v1) -- node[edge_label, midway, above] {$0$} (v3);
        \draw (v0) -- node[edge_label, pos=0.25, left, xshift=25pt] {$[1, 1+\varepsilon]$} (v1);
        \draw (v1) -- node[edge_label, pos=0.4, right, xshift=-12pt] {$[3, 3+\varepsilon]$} (v2);
        \draw (v2) -- node[edge_label, pos=0.25, left, xshift=25pt] {$[1, 1+\varepsilon]$} (v3);
        \draw (v3) -- node[edge_label, pos=0.4, right, xshift=-12pt] {$[3, 3+\varepsilon]$} (v4);
        \draw (v0) -- node[edge_label, midway, below] {$[-\varepsilon, \varepsilon]$} (v2);
        \draw (v2) -- node[edge_label, midway, below] {$[2, 2+\varepsilon]$} (v4);
    \end{scope}

    \begin{scope}[xshift=4.8cm, yshift=-6.0cm]
        \node[vertex] (v1) at (0, 1.5) {}; \node[vertex] (v3) at (2, 1.5) {};
        \node[vertex] (v0) at (-1, 0) {};  \node[vertex] (v2) at (1, 0) {};   \node[vertex] (v4) at (3, 0) {};
        \draw (v0)--(v1); \draw (v1)--(v2); \draw (v2)--(v3); \draw (v3)--(v4); \draw (v4)--(v2); \draw (v2)--(v0); \draw (v1)--(v3);
        \draw (v1) -- node[edge_label, midway, above] {$0$} (v3);
        \draw (v0) -- node[edge_label, pos=0.25, left, xshift=25pt] {$[2, 2+\varepsilon]$} (v1);
        \draw (v1) -- node[edge_label, pos=0.4, right, xshift=-12pt] {$[3, 3+\varepsilon]$} (v2);
        \draw (v2) -- node[edge_label, pos=0.25, left, xshift=25pt] {$[1, 1+\varepsilon]$} (v3);
        \draw (v3) -- node[edge_label, pos=0.4, right, xshift=-12pt] {$[3, 3+\varepsilon]$} (v4);
        \draw (v0) -- node[edge_label, midway, below] {$[-\varepsilon, \varepsilon]$} (v2);
        \draw (v2) -- node[edge_label, midway, below] {$[2, 2+\varepsilon]$} (v4);
    \end{scope}

    \begin{scope}[xshift=9.6cm, yshift=-6.0cm]
        \node[vertex] (v1) at (0, 1.5) {}; \node[vertex] (v3) at (2, 1.5) {};
        \node[vertex] (v0) at (-1, 0) {};  \node[vertex] (v2) at (1, 0) {};   \node[vertex] (v4) at (3, 0) {};
        \draw (v0)--(v1); \draw (v1)--(v2); \draw (v2)--(v3); \draw (v3)--(v4); \draw (v4)--(v2); \draw (v2)--(v0); \draw (v1)--(v3);
        \draw (v1) -- node[edge_label, midway, above] {$0$} (v3);
        \draw (v0) -- node[edge_label, pos=0.25, left, xshift=25pt] {$[3, 3+\varepsilon]$} (v1);
        \draw (v1) -- node[edge_label, pos=0.4, right, xshift=-12pt] {$[2, 2+\varepsilon]$} (v2);
        \draw (v2) -- node[edge_label, pos=0.25, left, xshift=25pt] {$[1, 1+\varepsilon]$} (v3);
        \draw (v3) -- node[edge_label, pos=0.4, right, xshift=-12pt] {$[2, 2+\varepsilon]$} (v4);
        \draw (v0) -- node[edge_label, midway, below] {$[-\varepsilon, \varepsilon]$} (v2);
        \draw (v2) -- node[edge_label, midway, below] {$[3, 3+2\varepsilon]$} (v4);
    \end{scope}
    
    
    \begin{scope}[xshift=0cm, yshift=-9.0cm]
        \node[vertex] (v1) at (0, 1.5) {}; \node[vertex] (v3) at (2, 1.5) {};
        \node[vertex] (v0) at (-1, 0) {};  \node[vertex] (v2) at (1, 0) {};   \node[vertex] (v4) at (3, 0) {};
        \draw (v0)--(v1); \draw (v1)--(v2); \draw (v2)--(v3); \draw (v3)--(v4); \draw (v4)--(v2); \draw (v2)--(v0); \draw (v1)--(v3);
        \draw (v1) -- node[edge_label, midway, above] {$0$} (v3);
        \draw (v0) -- node[edge_label, pos=0.25, left, xshift=25pt] {$[1, 1+\varepsilon]$} (v1);
        \draw (v1) -- node[edge_label, pos=0.4, right, xshift=-12pt] {$[3, 3+\varepsilon]$} (v2);
        \draw (v2) -- node[edge_label, pos=0.25, left, xshift=25pt] {$[2, 2+\varepsilon]$} (v3);
        \draw (v3) -- node[edge_label, pos=0.4, right, xshift=-12pt] {$[3, 3+\varepsilon]$} (v4);
        \draw (v0) -- node[edge_label, midway, below] {$[-\varepsilon, \varepsilon]$} (v2);
        \draw (v2) -- node[edge_label, midway, below] {$[1, 1+\varepsilon]$} (v4);
    \end{scope}

    \begin{scope}[xshift=4.8cm, yshift=-9.0cm]
        \node[vertex] (v1) at (0, 1.5) {}; \node[vertex] (v3) at (2, 1.5) {};
        \node[vertex] (v0) at (-1, 0) {};  \node[vertex] (v2) at (1, 0) {};   \node[vertex] (v4) at (3, 0) {};
        \draw (v0)--(v1); \draw (v1)--(v2); \draw (v2)--(v3); \draw (v3)--(v4); \draw (v4)--(v2); \draw (v2)--(v0); \draw (v1)--(v3);
        \draw (v1) -- node[edge_label, midway, above] {$0$} (v3);
        \draw (v0) -- node[edge_label, pos=0.25, left, xshift=25pt] {$[2, 2+\varepsilon]$} (v1);
        \draw (v1) -- node[edge_label, pos=0.4, right, xshift=-12pt] {$[3, 3+\varepsilon]$} (v2);
        \draw (v2) -- node[edge_label, pos=0.25, left, xshift=25pt] {$[2, 2+\varepsilon]$} (v3);
        \draw (v3) -- node[edge_label, pos=0.4, right, xshift=-12pt] {$[3, 3+\varepsilon]$} (v4);
        \draw (v0) -- node[edge_label, midway, below] {$[-\varepsilon, \varepsilon]$} (v2);
        \draw (v2) -- node[edge_label, midway, below] {$[1, 1+\varepsilon]$} (v4);
    \end{scope}

    \begin{scope}[xshift=9.6cm, yshift=-9.0cm]
        \node[vertex] (v1) at (0, 1.5) {}; \node[vertex] (v3) at (2, 1.5) {};
        \node[vertex] (v0) at (-1, 0) {};  \node[vertex] (v2) at (1, 0) {};   \node[vertex] (v4) at (3, 0) {};
        \draw (v0)--(v1); \draw (v1)--(v2); \draw (v2)--(v3); \draw (v3)--(v4); \draw (v4)--(v2); \draw (v2)--(v0); \draw (v1)--(v3);
        \draw (v1) -- node[edge_label, midway, above] {$0$} (v3);
        \draw (v0) -- node[edge_label, pos=0.25, left, xshift=25pt] {$[3, 3+\varepsilon]$} (v1);
        \draw (v1) -- node[edge_label, pos=0.4, right, xshift=-12pt] {$[2, 2+\varepsilon]$} (v2);
        \draw (v2) -- node[edge_label, pos=0.25, left, xshift=25pt] {$[3, 3+\varepsilon]$} (v3);
        \draw (v3) -- node[edge_label, pos=0.4, right, xshift=-12pt] {$[2, 2+\varepsilon]$} (v4);
        \draw (v0) -- node[edge_label, midway, below] {$[-\varepsilon, \varepsilon]$} (v2);
        \draw (v2) -- node[edge_label, midway, below] {$[1, 1+\varepsilon]$} (v4);
    \end{scope}

    \end{tikzpicture}
    \caption{All $12$ possible colourings of $H$.}
    \label{fig:subgraph colouring}
\end{figure}

In all the depicted configurations, the circular distance between the colours of $f_i$ and $e_i$ is at most $\varepsilon$ and the circular distance of colours of $f_i$ and $e_{i+2}$ is more than $1/2$ (for instance, for the top left corner case, $0$ is equal to $4+\varepsilon$ on the colour circle, so its circular distance from the colour of $e_{i+2}$ is at least $(4+\varepsilon)-(3+2\varepsilon) = 1-\varepsilon > 1/2$).
\end{proof}

Since $e_t \equiv f_{2n+t}$, the property of edges $f_i$ proved in Lemma~\ref{thm:aligned-bound} also holds for edges $e_i$. Given a fixed $(4+\varepsilon)$-edge-colouring of $G_{2n+1}$, we say that a pair of edges $e_i$, $f_i$ is aligned if their colours differ by less than $\varepsilon$.

\begin{lemma}
\label{thm:lower-bound}
For every positive integer \( n \), $\displaystyle \cchi\left(G_{2n+1}\right) \geq 4 + \frac{1}{2}$.
\end{lemma}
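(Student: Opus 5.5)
The plan is to argue by contradiction using Lemma~\ref{thm:aligned-bound} and a parity argument on the odd cycle formed by the ``long'' edges $v_jv_{j+2}$.

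First the trivial range. Since $G_{2n+1}$ is $4$-regular, $\cchi(G_{2n+1})\ge 4$, so it suffices to exclude a $(4+\varepsilon)$-edge-colouring with $0\le\varepsilon<1/2$. For very small $n$ (where $v_i,\dots,v_{i+4}$ are not distinct, or the circulant degenerates), I would handle the graph directly; for instance $n=2$ gives $K_5$, for which $\cchi=5$ is known. So assume $n$ is large enough that the five vertices in Lemma~\ref{thm:aligned-bound} are distinct, and fix a $(4+\varepsilon)$-edge-colouring $c$.

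Write $g_j=v_jv_{j+2}$ for $j\in\mathbb{Z}_{2n+1}$. Then $e_i=g_i$ and $f_i=g_{i+1}$ for even $i$. Because $\gcd(2,2n+1)=1$, the edges $g_j$ form a single closed sequence $g_0,g_1,\dots,g_{2n}$ of odd length $2n+1$, in which $g_j$ and $g_{j+1}$ share the vertex $v_{j+2}$.

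The key step is to show that for \emph{every} $j$ (not only odd $j$), $g_j$ is aligned, i.e.\ $d_{4+\varepsilon}(c(g_j),c(g_{j\pm1}))\le\varepsilon$, with exactly one of $g_{j-1},g_{j+1}$, and has colour distance at least $1/2$ from the other. For odd $j$ this is Lemma~\ref{thm:aligned-bound} applied with $i=j-1$. For even $j$ I would use the remark after the lemma: the relabelling $e_t\equiv f_{2n+t}$ is just the rotation $v_k\mapsto v_{k+2n+1}$ read with the opposite parity. More directly, the proof of Lemma~\ref{thm:aligned-bound} uses only the induced subgraph on five consecutive vertices, and this subgraph is isomorphic for every starting index. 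Either way the statement holds uniformly in $j$.

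Since $\varepsilon<1/2$, the two alternatives ``distance $\le\varepsilon$'' and ``distance $\ge 1/2$'' are mutually exclusive. Hence alignment is a well-defined symmetric relation between cyclically consecutive $g_j$'s. Each $g_j$ is aligned with exactly one of its two cyclic neighbours, so the aligned pairs $\{g_j,g_{j+1}\}$ are pairwise disjoint and cover all $2n+1$ edges $g_j$. They therefore form a perfect matching of a cycle of odd length $2n+1$, which is impossible. This contradiction shows that no $(4+\varepsilon)$-edge-colouring with $\varepsilon<1/2$ exists, so $\cchi(G_{2n+1})\ge 9/2$.

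The main obstacle is the middle step: making sure Lemma~\ref{thm:aligned-bound} really transfers from odd-indexed to even-indexed $g_j$ via the circulant symmetry. One must also confirm that ``exactly one'' holds in the strict sense, so that an edge cannot be aligned with both neighbours. That uses $\varepsilon<1/2$ and the lower bound of $1/2$ on the distance to the other neighbour given in the lemma.
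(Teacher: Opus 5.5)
Your argument is correct and is essentially the paper's proof. The paper propagates the alignment of $e_i=g_i$ with $f_i=g_{i+1}$ by induction around the indices until $e_{2n}=g_{2n}$ is aligned with $f_{2n}\equiv e_0=g_0$, so $g_0$ ends up aligned with both index-neighbours, and it closes with the triangle inequality at $v_1$ (where $g_{2n}$ and $g_1$ meet) rather than your perfect-matching parity count on an odd cycle, which is the same contradiction stated more symmetrically. One harmless slip: $g_j=v_jv_{j+2}$ and $g_{j+1}=v_{j+1}v_{j+3}$ are disjoint edges (it is $g_j$ and $g_{j+2}$ that share $v_{j+2}$), so your parity argument runs on the abstract cycle of indices modulo $2n+1$, not on a chain of adjacent edges, and that is all your argument actually uses.
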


\begin{proof}
Assume that $G_{2n+1}$ admits a circular $(4+\varepsilon)$-edge-colouring $\varphi$ for some $\varepsilon < 1/2$. We will derive a contradiction.

According to Lemma~\ref{thm:aligned-bound}, $f_0$ is aligned either with $e_0$ or with $e_2$. Thanks to the symmetry of $G_{2n+1}$, we only need to analyze the case where $f_0$ is aligned with $e_0$. Then it is not aligned with $e_2$. Since $e_2$ has to be aligned with one of $f_0$, $f_2$, it has to be aligned with $f_2$. We can extend the alignment of $e_i$ with $f_i$ by induction to all $i \in \{0,2,\dots,2n\}$. Consequently, $e_{2n}$ is aligned with $f_{2n} \equiv e_0$ (see Figure~\ref{fig:circulant_contradiction}). Hence
$$
|\varphi(e_{2n}) - \varphi(e_0)| \le \varepsilon.
$$
Now we can use the triangle inequality to derive a contraction: the edges $e_{2n}$ and $f_0$ are both incident with $v_1$, so 
$$
1\le |\varphi(e_{2n}) - \varphi(f_0)| \leq |\varphi(e_{2n}) - \varphi(e_0)| + |\varphi(e_0) - \varphi(f_0)| \le 2\varepsilon < 1.
$$
\end{proof}

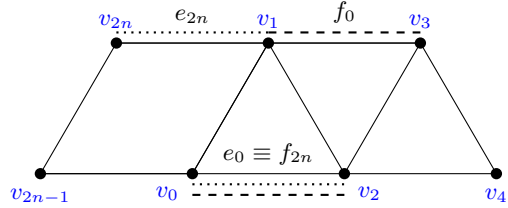
\begin{figure}
    \centering
    \begin{tikzpicture}[
        vertex/.style={circle, fill=black, inner sep=1.5pt, outer sep=0pt},
        vlabel/.style={text=blue, font=\small},
        edge_label/.style={midway, font=\small, text=black}
    ]
    \usetikzlibrary{calc}

    \def\h{1.732}

    \coordinate (v2n-1) at (-2,0);
    \coordinate (v0) at (0,0);
    \coordinate (v2) at (2,0);
    \coordinate (v4) at (4,0);

    \coordinate (v2n) at (-1,\h);
    \coordinate (v1) at (1,\h);
    \coordinate (v3) at (3,\h);

    \draw (v2n-1) -- (v2n) -- (v1) -- (v0) -- (v2n-1);
    \draw (v0) -- (v1) -- (v2) -- (v3) -- (v1);
    \draw (v2) -- (v4);
    \draw (v2n-1) -- (v0);
    \draw (v2n) -- (v3);
    \draw (v3) -- (v4);
    
    \draw (v0) -- (v2) node[edge_label, above] {$e_0 \equiv f_{2n}$};

    \node[vertex, label={[vlabel]below:$v_{2n-1}$}] at (v2n-1) {};
    \node[vertex, label={[vlabel]below left:$v_0$}] at (v0) {};
    \node[vertex, label={[vlabel]below right:$v_2$}] at (v2) {};
    \node[vertex, label={[vlabel]below:$v_4$}] at (v4) {};

    \node[vertex, label={[vlabel]above:$v_{2n}$}] at (v2n) {};
    \node[vertex, label={[vlabel]above:$v_1$}] at (v1) {};
    \node[vertex, label={[vlabel]above:$v_3$}] at (v3) {};

    \def\offset{4pt} 

    \draw[dotted, thick] ([yshift=\offset]v2n) -- ([yshift=\offset]v1) node[edge_label, above] {$e_{2n}$};

    \draw[dashed, thick] ([yshift=\offset]v1) -- ([yshift=\offset]v3) node[edge_label, above] {$f_0$};

    \draw[dotted, thick] ([yshift=-\offset]v0) -- ([yshift=-\offset]v2);
    \draw[dashed, thick] ([yshift=-2*\offset]v0) -- ([yshift=-2*\offset]v2);

    \end{tikzpicture}
    \caption{Contradiction for $\varepsilon < 1/2$ in $C_{2n+1}(1,2)$.}
    \label{fig:circulant_contradiction}
\end{figure}

With a computer, we determined $\cchi(G_5) = \cchi(G_7) = 5$ and $\cchi(G_9)=\cchi(G_{11}) = 14/3$. All the larger circulants have a $9/2$-edge-colouring.

\begin{lemma}
For $n\ge 6$, $G_{2n+1}$ is $(9,2)$-edge-colourable.
\end{lemma}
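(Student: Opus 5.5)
The plan is to construct the colouring explicitly rather than argue abstractly. A $(9,2)$-edge-colouring assigns to each edge a colour in $\mathbb{Z}_9$ so that edges sharing a vertex get colours at cyclic distance at least $2$. Lemma~\ref{thm:aligned-bound} only forbids $\varepsilon<1/2$, so it poses no obstruction at $\varepsilon=1/2$. I would encode colourings of $G_{2n+1}$ by a transfer digraph. Write $a_i=v_iv_{i+1}$ and $b_i=v_iv_{i+2}$. The \emph{state} at position $i$ is the triple $s_i=(\varphi(b_{i-1}),\varphi(a_i),\varphi(b_i))$, that is, the colours of the three edges crossing the ``cut'' between $v_i$ and $v_{i+1}$. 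There is an arc $s_i\to s_{i+1}$ when the two triples agree on the shared coordinate $b_i$ and the four edges at $v_{i+1}$, namely $b_{i-1},a_i,a_{i+1},b_{i+1}$, receive pairwise compatible colours. Compatibility of $b_i$ with $a_{i+1}$ at $v_{i+2}$ is then checked at the next step. With this encoding, $(9,2)$-edge-colourings of $G_{2n+1}$ correspond exactly to closed walks of length $2n+1$ in this finite digraph $D$ (at most $9^3$ states). A useful consistency check: since $\cchi(G_9)=\cchi(G_{11})=14/3>9/2$, $D$ has no closed walks of length $9$ or $11$.

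The key step is to exhibit a single state $s$ lying on two closed walks $W_{13}$ and $W_2$, of lengths $13$ and $2$ respectively. If length $2$ turns out to be impossible, I would use a closed walk of length $4$ through $s$, together with an additional closed walk of length $15$ through $s$. Concatenating $W_{13}$ with $k$ copies of $W_2$ gives a closed walk of length $13+2k$, hence a colouring of $G_{2n+1}$ for every $n\ge 6$. In the length-$4$ fallback, $13+4k$ and $15+4k$ together cover all odd lengths $\ge 13$. Concatenation is legitimate because each arc condition is local and every junction is at the common state $s$. A length-$2$ loop amounts to a periodic pattern of period $2$ on a long strip of triangles. It is natural to look first for one where colours along the ``zigzag'' $a_i$ alternate between two values and the $b_i$ alternate between two others, e.g. $a$-colours in $\{0,4\}$-like pairs with the $b$'s filling the gaps $2$ and $6$/$7$. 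The $13$-walk then has to ``absorb'' the odd parity, presumably via one or two short defect regions where the pattern is twisted.

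I expect the main obstacle to be finding the odd walk $W_{13}$ (and $W_{15}$ if needed) compatible with the periodic part. The smallest odd cycles in $D$ are exactly what distinguishes $9/2$ from $14/3$ for $n\le 5$, so there is no slack. I would find these by a short computer search (BFS in $D$ from the states on a length-$2$ or length-$4$ cycle). I would then present the resulting colour sequences as explicit tables of $\varphi(a_i),\varphi(b_i)$, so that the reader can verify the finitely many vertex conditions by hand, including those across the junctions. The remaining verification is routine: check each of the listed vertices of the two (or three) gadgets and the boundary state $s$.
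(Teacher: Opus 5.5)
Your framework is the paper's proof in its fallback form. The paper gives $(9,2)$-edge-colourings of $G_{13}$ and $G_{15}$, plus a $4$-vertex segment whose dangling edges carry the same colours as the three edges crossing the cut between $v_{2n}$ and $v_0$, and then uses $13+4k$ and $15+4k$. In your language: one state of $D$ lies on a $4$-cycle and on closed walks of lengths $13$ and $15$.

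Your primary route is not merely possibly unavailable; it is impossible. A closed walk $s\to t\to s$ with $s=(x,y,z)$ forces $t=(z,y',x)$. At the vertex $v_{i+1}$ checked by the arc $s\to t$, the edges $b_{i-1}$ and $b_{i+1}$ then both get colour $x$, although both contain $v_{i+1}$. The same objection kills your suggested pattern in which the $b_i$ alternate between two values. The periodic part needs period $4$, for example $\varphi(a_0),\varphi(a_1),\dots=0,2,0,2,\dots$ and $\varphi(b_0),\varphi(b_1),\dots=4,6,6,4,4,6,6,4,\dots$. This is the $4$-cycle $(4,0,4)\to(4,2,6)\to(6,0,6)\to(6,2,4)\to(4,0,4)$ of $D$, and it is exactly the paper's segment.

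The real gap is elsewhere. The lemma is a pure existence statement, and the step you call the main obstacle is its entire content: odd closed walks of lengths $13$ and $15$ through a state on that $4$-cycle. You only announce a computer search for them and do not carry it out. Nothing in the proposal shows these walks exist, and your own remark that $D$ has no closed walks of length $9$ or $11$ shows this is not automatic.

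The paper closes exactly this step by exhibiting the certificates. In both, the state at the cut between $v_{2n}$ and $v_0$ is $s=(4,0,4)$.
For $G_{13}$: $(\varphi(a_0),\dots,\varphi(a_{12}))=(2,0,5,0,7,1,8,3,8,3,8,2,0)$ and $(\varphi(b_0),\dots,\varphi(b_{12}))=(7,7,2,3,4,5,6,1,1,6,6,4,4)$.
For $G_{15}$: $(\varphi(a_0),\dots,\varphi(a_{14}))=(2,0,5,0,7,1,8,3,8,6,8,2,0,2,0)$ and $(\varphi(b_0),\dots,\varphi(b_{14}))=(7,7,2,3,4,5,6,1,1,4,4,6,6,4,4)$.
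Once these data are supplied and checked vertex by vertex, your concatenation argument goes through verbatim.
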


\begin{proof}
Figures~\ref{fig:G13} and \ref{fig:G15} show a $(9,2)$-edge-colouring of $G_{13}$ and $G_{15}$, respectively (the dangling edges wrap around). Figure~\ref{fig:segment} shows a colouring of a segment with $4$ vertices that has the same colours on dangling edges as the previously given colourings. Since every odd integer $m\ge 13$ can be written as $13+4k$ or $15+4k$ for a suitable $k$, the depicted colourings can be combined into a valid $(9,2)$-edge-colouring of any such $G_m$.
\end{proof}

\begin{figure}[h!]
    \centering
    \begin{tikzpicture}[
        vertex/.style={circle, fill=black, inner sep=2pt, outer sep=0pt},
        elabel/.style={font=\small\color{black}, midway, fill=white, inner sep=1pt},
        dist1_edge/.style={bend right=0},
        dist2_edge/.style={bend left=50}
    ]

    \foreach \i in {0,...,12} {
        \node[vertex] (v\i) at (\i * 0.9, 0) {};
    }

    
    \def\distOneColors{{2, 0, 5, 0, 7, 1, 8, 3, 8, 3, 8, 2}} 
    \def\distTwoColors{{7, 7, 2, 3, 4, 5, 6, 1, 1, 6, 6}}    

    \def\colorRightTwelveZero{4}  
    \def\colorRightElevenZero{4}  
    \def\colorRightTwelveOne{0}   
    
    \def\colorLeftOneTwelve{4}    
    \def\colorLeftZeroTwelve{4}   
    \def\colorLeftZeroEleven{0}   

    \foreach \i in {0,...,11} {
        \pgfmathtruncatemacro{\nexti}{\i+1}
        \draw (v\i) to[dist1_edge] node[elabel, below = 3 pt] {\pgfmathparse{\distOneColors[\i]}\pgfmathresult} (v\nexti);
    }

    \foreach \i in {0,...,10} {
        \pgfmathtruncatemacro{\nexti}{\i+2}
        \draw (v\i) to[dist2_edge] node[elabel, above = 3 pt] {\pgfmathparse{\distTwoColors[\i]}\pgfmathresult} (v\nexti);
    }

    \def\xRightAlign{12}
    \def\xLeftAlign{-1}
    
    \draw (v12) to[out=60, in=180] (\xRightAlign, 0.8) node[font=\small\color{blue}, right=2pt] {\colorRightTwelveZero};
    \draw (v11) to[out=45, in=180]  (\xRightAlign, 0.4)   node[font=\small\color{blue}, right=2pt] {\colorRightElevenZero};
    \draw (v12) to[out=0, in=180] (\xRightAlign, 0) node[font=\small\color{blue}, right=2pt] {\colorRightTwelveOne};

    \draw (v1) to[out=135, in=0] (\xLeftAlign, 0.8) node[font=\small\color{blue}, left=2pt] {\colorLeftOneTwelve};
    \draw (v0) to[out=180, in=0] (\xLeftAlign, 0)   node[font=\small\color{blue}, left=2pt] {\colorLeftZeroEleven};
    \draw (v0) to[out=135, in=0] (\xLeftAlign, 0.4) node[font=\small\color{blue}, left=2pt] {\colorLeftZeroTwelve};

    \end{tikzpicture}
    \caption{$(9,2)$-edge-colouring of \( G_{13} \)}
    \label{fig:G13}
\end{figure}
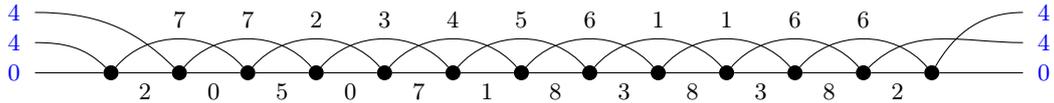

\begin{figure}[h!]
    \centering
    \begin{tikzpicture}[
        vertex/.style={circle, fill=black, inner sep=2pt, outer sep=0pt},
        elabel/.style={font=\small\color{black}, midway, fill=white, inner sep=1pt},
        dist1_edge/.style={bend right=0},
        dist2_edge/.style={bend left=50}
    ]

    \foreach \i in {0,...,14} {
        \node[vertex] (v\i) at (\i * 0.9, 0) {};
    }

    
    \def\distOneColors{{2, 0, 5, 0, 7, 1, 8, 3, 8, 6, 8, 2, 0, 2}} 
    \def\distTwoColors{{7, 7, 2, 3, 4, 5, 6, 1, 1, 4, 4, 6, 6}}    

    \def\colorRightTwelveZero{4}  
    \def\colorRightElevenZero{4}  
    \def\colorRightTwelveOne{0}   
    
    \def\colorLeftOneTwelve{4}    
    \def\colorLeftZeroTwelve{4}   
    \def\colorLeftZeroEleven{0}   

    \foreach \i in {0,...,13} {
        \pgfmathtruncatemacro{\nexti}{\i+1}
        \draw (v\i) to[dist1_edge] node[elabel, below = 3 pt] {\pgfmathparse{\distOneColors[\i]}\pgfmathresult} (v\nexti);
    }

    \foreach \i in {0,...,12} {
        \pgfmathtruncatemacro{\nexti}{\i+2}
        \draw (v\i) to[dist2_edge] node[elabel, above = 3 pt] {\pgfmathparse{\distTwoColors[\i]}\pgfmathresult} (v\nexti);
    }

    \def\xRightAlign{13.5}
    \def\xLeftAlign{-.7}
    
    \draw (v14) to[out=60, in=180] (\xRightAlign, 0.8) node[font=\small\color{blue}, right=2pt] {\colorRightTwelveZero};
    \draw (v13) to[out=45, in=180]  (\xRightAlign, 0.4)   node[font=\small\color{blue}, right=2pt] {\colorRightElevenZero};
    \draw (v14) to[out=0, in=180] (\xRightAlign, 0) node[font=\small\color{blue}, right=2pt] {\colorRightTwelveOne};

    \draw (v1) to[out=135, in=0] (\xLeftAlign, 0.8) node[font=\small\color{blue}, left=2pt] {\colorLeftOneTwelve};
    \draw (v0) to[out=180, in=0] (\xLeftAlign, 0)   node[font=\small\color{blue}, left=2pt] {\colorLeftZeroEleven};
    \draw (v0) to[out=135, in=0] (\xLeftAlign, 0.4) node[font=\small\color{blue}, left=2pt] {\colorLeftZeroTwelve};

    \end{tikzpicture}
    \caption{$(9,2)$-edge-colouring of \( G_{15} \)}
    \label{fig:G15}
\end{figure}

\begin{figure}[h!]
    \centering
    \begin{tikzpicture}[
        vertex/.style={circle, fill=black, inner sep=2pt, outer sep=0pt},
        elabel/.style={font=\small\color{black}, midway, fill=white, inner sep=1pt},
        dist1_edge/.style={bend right=0},
        dist2_edge/.style={bend left=50}
    ]

    \foreach \i in {0,...,3} {
        \node[vertex] (v\i) at (\i * 0.9, 0) {};
    }

    
    \def\distOneColors{{2, 0, 2}} 
    \def\distTwoColors{{6, 6}}    

    \def\colorRightTwelveZero{4}  
    \def\colorRightElevenZero{4}  
    \def\colorRightTwelveOne{0}   
    
    \def\colorLeftOneTwelve{4}    
    \def\colorLeftZeroTwelve{4}   
    \def\colorLeftZeroEleven{0}   

    \foreach \i in {0,...,2} {
        \pgfmathtruncatemacro{\nexti}{\i+1}
        \draw (v\i) to[dist1_edge] node[elabel, below = 3pt] {\pgfmathparse{\distOneColors[\i]}\pgfmathresult} (v\nexti);
    }

    \foreach \i in {0,...,1} {
        \pgfmathtruncatemacro{\nexti}{\i+2}
        \draw (v\i) to[dist2_edge] node[elabel, above = 3 pt] {\pgfmathparse{\distTwoColors[\i]}\pgfmathresult} (v\nexti);
    }

    \def\xRightAlign{4}
    \def\xLeftAlign{-1}
    
    \draw (v3) to[out=60, in=180] (\xRightAlign, 0.8) node[font=\small\color{blue}, right=2pt] {\colorRightTwelveZero};
    \draw (v2) to[out=45, in=180]  (\xRightAlign, 0.4)   node[font=\small\color{blue}, right=2pt] {\colorRightElevenZero};
    \draw (v3) to[out=0, in=180] (\xRightAlign, 0) node[font=\small\color{blue}, right=2pt] {\colorRightTwelveOne};

    \draw (v1) to[out=135, in=0] (\xLeftAlign, 0.8) node[font=\small\color{blue}, left=2pt] {\colorLeftOneTwelve};
    \draw (v0) to[out=180, in=0] (\xLeftAlign, 0)   node[font=\small\color{blue}, left=2pt] {\colorLeftZeroEleven};
    \draw (v0) to[out=135, in=0] (\xLeftAlign, 0.4) node[font=\small\color{blue}, left=2pt] {\colorLeftZeroTwelve};

    \end{tikzpicture}
    \caption{$(9,2)$-edge-colouring of a $4$-vertex segment of a circulant}
    \label{fig:segment}
\end{figure}
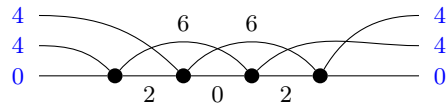

\section{Open problems}

Some of these problems were touched upon before in this paper, but we list them here for easier future referencing.

\begin{problem}
Is it true that if $\cchi(G)\in (9/2, 5]$ for a simple graph $G$ with $\Delta(G) = 4$, then $\cchi(G)\in\{14/3, 5\}$? (And if $\cchi(G) = 14/3$, then $G$ is $4$-regular?)
\end{problem}

\begin{problem}
For which $r$ does there exist a (multi)graph with $\cchi = r$, yet there are only finitely many ($\cchi$-critical?) (multi)graphs $G$ with $\cchi(G) = r$?
\end{problem}

\begin{problem}
For which $r$ does there exist a multigraph with $\cchi = r$, yet there are no simple graphs with $\cchi = r$?
\end{problem}

\begin{problem}
For which $r$ does there exist a (multi)graph with $\cchi = r$ and maximum degree $\Delta$, yet there are no $\Delta$-regular (multi)graphs with $\cchi = r$?
\end{problem}

Some multigraphs have $\cchi$ greater than $\Delta + 1$. It is not known whether such multigraphs contribute some unique values of $\cchi$ that are not attained for multigraphs with higher maximum degree.

\begin{problem}
Does there exist $r$ such that all multigraphs $G$ with $\cchi(G)=r$ have $\Delta(G) + 1 < r$?
\end{problem}

It has been known for a long time that one can create $3$-connected $3$-regular graphs $G$ with $\cchi(G) = 3+1/2$ by replacing a vertex of any $3$-regular graph with a copy of the Petersen graph with one vertex removed \cite{mazaklukotka}. Theorem~\ref{thm:circulants} proves the existence of a $4$-connected $4$-regular graph $G$ with $\cchi(G) = 4+1/2$ (an infinite family of them, actually).

\begin{problem}
For which $d$ does there exist a $d$-connected $d$-regular graph with circular chromatic index at least $d+1/2$?
\end{problem}

It would also be interesting to know which denominators are possible in the upper half of the interval $[\Delta, \Delta + 1]$.

\begin{problem}
Does there for every integer $q > 2$ exist an integer $p$ and a (multi)graph $G$ with $\cchi(G) = p/q \in (\Delta(G) + 1/2, \Delta(G)+1)$?
\end{problem}

Currently, for any $\Delta\ge 8$, we don't even know if there is a single graph with maximum degree $\Delta$ and $\cchi\in (\Delta(G) + 1/2, \Delta(G)+1)$. 

\section*{Acknowledgements}

We thank Xuding Zhu for discussions on various topics related to circular 
colourings.

\bigskip

This work was partially supported from the research grants APVV-23-0076, VEGA 1/0173/25, and VEGA 1/0613/26.

\bigskip

During the preparation of this work, the authors used Anthropic's Claude and Google's Gemini in order to generate and check some parts of the computer code, draw figures, check the article text for mistakes, and to obtain suggestions to improve clarity of the text. After using these tools/services, the authors reviewed and edited the content as needed and take full responsibility for the content of the published article.

\bibliographystyle{plain}

\bibliography{literature}

@article{Lin2015,
  title   = {Circular chromatic indices of even degree regular graphs},
  journal = {Discrete Mathematics},
  volume  = {338},
  number  = {7},
  pages   = {1154-1162},
  year    = {2015},
  issn    = {0012-365X},
  doi     = {10.1016/j.disc.2015.01.037},
  author  = {Cheyu Lin and Tsai-Lien Wong and Xuding Zhu},
}

@article{Lin2013,
  title   = {Circular Chromatic Indices of Regular Graphs},
  journal = {Journal of Graph Theory},
  volume  = {76},
  number  = {3},
  pages   = {169-193},
  year    = {2014},
  doi     = {10.1002/jgt.21757},
  author  = {Cheyu Lin and Tsai-Lien Wong and Xuding Zhu},
}

@article{candrakova2014,
  title = {k-Regular graphs with the circular chromatic index close to k},
  journal = {Discrete Mathematics},
  volume = {322},
  pages = {19-25},
  year = {2014},
  issn = {0012-365X},
  doi = {10.1016/j.disc.2013.12.021},
  author = {Barbora Candráková and Edita Máčajová}
}

@article{Kaiser2004,
  title = {A revival of the girth conjecture},
  journal = {Journal of Combinatorial Theory, Series B},
  volume = {92},
  number = {1},
  pages = {41-53},
  year = {2004},
  doi = {10.1016/j.jctb.2004.04.003},
  author = {Tomáš Kaiser and Daniel Král' and Riste Škrekovski}
}

@article{Leven1983NP,
  title   = {{NP}-completeness of finding the chromatic index of regular graphs},
  author  = {Leven, Daniel and Galil, Zvi},
  journal = {Journal of Algorithms},
  volume  = {4},
  number  = {1},
  pages   = {35--44},
  year    = {1983},
  doi     = {10.1016/0196-6774(83)90032-9},
  publisher={Elsevier}
}

@phdthesis{ghebleh2007theorems,
  title={Theorems and Computations in Circular Colourings of Graphs},
  author={Ghebleh, Mohammad},
  year={2007},
  school={Simon Fraser University}
}

@inproceedings{bernat2024circular,
  title={Circular chromatic index of small snarks},
  author={Bern{á}t, Du{š}an and Maz{á}k, J{á}n},
  booktitle={Proceedings of the 24th Conference Information Technologies – Applications and Theory (ITAT 2024)},
  editor={Ciencialov{á}, Lucie and Hole{č}n{á}, Martin and Jajcay, R{ó}bert and Jajcayov{á}, Tatiana and Ma{č}aj, Martin and Mr{á}z, Franti{š}ek and Ostert{á}g, Richard and Pardubsk{á}, Dana and Pl{á}tek, Martin and Stanek, Martin},
  year={2024},
  volume={3792},
  pages={150-155},
  publisher={CEUR-WS.org},
  url={https://ceur-ws.org/Vol-3792/paper18.pdf}
}

@article{meringer1999fast,
  title={Fast Generation of Regular Graphs and Construction of Cages},
  author={Meringer, M.},
  journal={Journal of Graph Theory},
  volume={30},
  number={2},
  pages={137--146},
  year={1999},
  publisher={Wiley Online Library}
}

@article{Hackmann2004,
  title   = {The circular chromatic index},
  author  = {Hackmann, Andrea and Kemnitz, Arnfried},
  journal = {Discrete Mathematics},
  volume  = {286},
  number  = {1–2},
  pages   = {89-93},
  year    = {2004},
  doi     = {10.1016/j.disc.2003.11.050}
}

@article{ghebleh2007,
  title = {The circular chromatic index of Goldberg snarks},
  journal = {Discrete Mathematics},
  volume = {307},
  number = {24},
  pages = {3220-3225},
  year = {2007},
  issn = {0012-365X},
  doi = {10.1016/j.disc.2007.03.047},
  author = {Mohammad Ghebleh}
}

@article{McKay2014,
  author = {McKay, Brendan D. and Piperno, Adolfo},
  title = {Practical Graph Isomorphism, II},
  journal = {Journal of Symbolic Computation},
  volume = {60},
  pages = {94--112},
  year = {2014},
  doi = {10.1016/j.jsc.2013.09.003}
}

@article{nadolski,
    title = {The circular chromatic index of some class 2 graphs},
    journal = {Discrete Mathematics},
    volume = {307},
    number = {11},
    pages = {1447-1454},
    year = {2007},
    note = {The Fourth Caracow Conference on Graph Theory},
    issn = {0012-365X},
    doi = {https://doi.org/10.1016/j.disc.2005.11.092},
    url = {https://www.sciencedirect.com/science/article/pii/S0012365X06007412},
    author = {Adam Nadolski}
}

@article{genreg,
  title={Fast Generation of Regular Graphs and Construction of Cages},
  author={Meringer, Marcus},
  journal={Journal of Graph Theory},
  volume={30},
  pages={137--146},
  year={1999},
  publisher={Wiley Online Library}
}

@article{vizing1965,
  title={The chromatic class of a multigraph},
  author={Vizing, V. G.},
  journal={Kibernetika (Kiev)},
  volume={1},
  number={3},
  pages={29--39},
  year={1965}
}

@article{Cordella2004,
  author    = {Luigi P. Cordella and Pasquale Foggia and Carlo Sansone and Mario Vento},
  title     = {A (Sub)Graph Isomorphism Algorithm for Matching Large Graphs},
  journal   = {IEEE Transactions on Pattern Analysis and Machine Intelligence},
  volume    = {26},
  number    = {10},
  pages     = {1367--1372},
  year      = {2004},
  doi       = {10.1109/TPAMI.2004.75}
}

@misc{bakalarka,
  author       = {Zrubák, Filip},
  title        = {Circular chromatic index of 4-regular graphs ({B}c. thesis)},
  school       = {Comenius University in Bratislava, Faculty of Mathematics, Physics and Informatics, Department of Computer Science},
  year         = {2025},
  url          = {http://www.dcs.fmph.uniba.sk/bakalarky/registracia/Detail.php?id=518},
  note         = {Supervisor: Ján Mazák}
}

@phdthesis{mazakthesis,
  title={Circular Edge-Colourings of Cubic Graphs},
  author={Mazák, Ján},
  year={2011},
  school={Comenius University in Bratislava, Faculty of Mathematics, Physics and Informatics}
}

@article{mazaklukotka,
author = {Lukoťka, Robert and Mazák, Ján},
title = {Cubic Graphs with Given Circular Chromatic Index},
journal = {SIAM Journal on Discrete Mathematics},
volume = {24},
number = {3},
pages = {1091-1103},
year = {2010},
doi = {10.1137/090752316}
}

@article{hackmann,
  title = {The circular chromatic index},
  journal = {Discrete Mathematics},
  volume = {286},
  number = {1},
  pages = {89-93},
  year = {2004},
  note = {Cycles and Colourings},
  issn = {0012-365X},
  doi = {10.1016/j.disc.2003.11.050},
  author = {Andrea Hackmann and Arnfried Kemnitz}
}

@article{fiolvilaltella,
  title={A simple and fast heuristic algorithm for edge-coloring of graphs},
  author={Fiol, M.A. and Vilaltella, J.},
  journal={Advances and Applications in Discrete Mathematics},
  volume={12},
  number={2},
  pages={193--202},
  year={2013},
  doi={10.17654/ADM012020193}
}

@misc{kissat,
  title={Kissat {SAT} {S}olver},
  author={Biere, Armin and others},
  year={2025},
  howpublished={Available at \url{https://github.com/arminbiere/kissat}}
}

@article{zhusurvey2001,
  title   = {Circular chromatic number: A survey},
  author  = {Xuding Zhu},
  journal = {Discrete Mathematics},
  volume  = {229},
  number  = {1-3},
  pages   = {371-410},
  year    = {2001},
  doi     = {10.1016/S0012-365X(00)00217-X}
}

@incollection{zhusurvey2006,
  title={Recent Developments in Circular Colouring of Graphs},
  author={Zhu, Xuding},
  booktitle={Topics in Discrete Mathematics},
  series={Algorithms and Combinatorics},
  volume={26},
  year={2006},
  pages={497--550},
  publisher={Springer, Berlin, Heidelberg},
  doi={10.1007/3-540-33123-8_21}
}

@article{afshani,
  title={Circular chromatic index of graphs of maximum degree 3},
  author={Afshani, Peyman and Ghandehari, Mahsa and Ghandehari, Mahya and Hatami, Hamed and Tusserkani, Ruzbeh and Zhu, Xuding},
  journal={Journal of Graph Theory},
  volume={49},
  number={4},
  pages={325--335},
  year={2005},
  doi={10.1002/jgt.20086}
}

@inproceedings{sat2024,
  author       = {Armin Biere and Tobias Faller and Katalin Fazekas and Mathias Fleury and Nils Froleyks and Florian Pollitt},
  title	       = {{CaDiCaL}, {Gimsatul}, {IsaSAT} and {Kissat} Entering the {SAT Competition 2024}},
  editor       = {Marijn Heule and Markus Iser and Matti J{\"a}rvisalo and Martin Suda},
  booktitle    = {Proc.~of {SAT Competition} 2024 -- Solver, Benchmark and
                  Proof Checker Descriptions},
  volume       = {B-2024-1},
  series       = {Department of Computer Science Report Series B},
  publisher    = {University of Helsinki},
  year	       = 2024,
  pages	       = {8-10},
}

@article{Shannon1949,
  author    = {Shannon, C. E.},
  title     = {A theorem on coloring the lines of a network},
  journal   = {Journal of Mathematics and Physics},
  volume    = {28},
  number    = {1-4},
  pages     = {148--152},
  year      = {1949},
  publisher = {Massachusetts Institute of Technology (MIT)},
  doi       = {10.1002/sapm1949281148}
}

@article{macaj2013asymptotic,
  title = {Asymptotic Lower Bounds on Circular Chromatic Index of Snarks},
  author = {Ma{\v{c}}aj, Martin and Maz{\'a}k, J{\'a}n},
  journal = {The Electronic Journal of Combinatorics},
  volume = {20},
  number = {2},
  pages = {P2},
  year = {2013},
  doi = {10.37236/2388},
  url = {https://www.combinatorics.org/ojs/index.php/eljc/article/view/v20i2p2}
}

\end{document}